\newtheorem{theorem}{Theorem}[section]
\newtheorem{proposition}[theorem]{Proposition}
\newtheorem{lemma}[theorem]{Lemma}
\newtheorem{corollary}[theorem]{Corollary}
\newtheorem{constr}[theorem]{Construction}
\newtheorem{hypothesis}[theorem]{Hypothesis}
\newtheorem{example}[theorem]{Example}
\newcommand{\GL}{\mathrm{GL}}
\mathchardef\mhyphen="2D
\newcommand{\nl}{\par \bigskip \noindent}
\newcommand{\norml}{\vartriangleleft}
\DeclareMathOperator{\Aut}{Aut}
\DeclareMathOperator{\Sym}{Sym}
\newcommand{\PG}{\mathop{\mathrm{PG}}}
\newcommand{\PSL}{\mathop{\mathrm{PSL}}}
\def\dotcup{\DOTSB\mathop{\overset{\textstyle.}\cup}}
\def\bigdotcup{\DOTSB\mathop{\overset{\textstyle.}\bigcup}}
\title[Basic and degenerate pregeometries]{Basic and degenerate pregeometries}
\author[Michael Giudici, Cai Heng Li, Geoffrey Pearce and Cheryl E.~Praeger]{Michael Giudici, Cai Heng Li, Geoffrey Pearce and Cheryl E.~Praeger \\ \\Centre for the Mathematics of Symmetry and Computation\\ 
School of Mathematics and Statistics\\
University of Western Australia\\
Crawley WA 6009 Australia}
\begin{document}

\begin{abstract}
We study pairs $(\Gamma,G)$, where $\Gamma$ is a `Buekenhout--Tits' pregeometry with all rank 2 truncations connected, and $G\leqslant\Aut\,\Gamma$ is transitive on the set of elements of each type.
The family of such pairs is closed under forming quotients with respect to $G$-invariant type-refining partitions of the element set of $\Gamma$. We identify the `basic' pairs  (those that admit no non-degenerate quotients), and show, by studying quotients and direct decompositions, that the  study of basic pregeometries reduces to examining those where the group $G$ is faithful and primitive on the set of elements of each type. 
We also study the special case of normal quotients, where we take quotients with respect to the orbits of a normal subgroup of $G$. There is a similar reduction for normal-basic pregeometries to those where $G$ is faithful and quasiprimitive on the set of elements of each type. 
%
\end{abstract}

\maketitle
\section{Introduction} \label{intro} 

In the 1950s,  Jacques Tits~\cite{T54,T55,T56} gave a unified geometric interpretation for all complex Lie groups, and in particular for the exceptional Lie groups, by means of a new kind of geometry, later called a building. 
According to Tits, `it was really the geometric interpretations of these mysterious groups, the exceptional
groups, that triggered everything'~\cite[page 474]{TT}. In the 1970s, inspired by Tits' work, Francis Buekenhout studied a more general family of incidence geometries aiming to find a geometrical interpretation for all the sporadic finite simple groups which, in conjunction with Tits' geometries, would provide a unified theory for all the (then known) finite simple groups, as well as a `unification of the known classes of combinatorial geometries'. As Buekenhout  explained further (see~\cite{B79}), his approach `reduces combinatorial geometry to graph theory and lattice theory'.

In this paper we analyse a larger family of Buekenhout--Tits geometries that satisfy
mild combinatorial properties (much weaker than buildings) and correspondingly mild transitivity properties (much weaker than flag-transitivity). We seek a rationale that enables us to identify the fundamental or `basic' geometries in the family, and to determine what additional geometric and symmetry properties these basic geometries possess. 
In what might be regarded as a pilot for this research project, we (the first, second and fourth authors) studied the family of connected rank 2, locally 2-transitive  geometries (or in graph theoretic language, locally $s$-arc transitive graphs), identifying and describing the basic examples and their automorphism groups, see \cite{GLP3,GLP4,GLP5,GLP6,GLP7}. 
We wanted to know if a similar combinatorial/geometrical approach would identify the basic geometries in the larger family as some (tractible) class containing the geometries for simple groups as a natural subclass.

The incidence geometries we consider are now called pregeometries, though Tits called them geometries in \cite{T56}. Pregeometries, their rank 2 truncations, and their automorphism groups, are defined in Subsection~\ref{incgeoms}. We study the family  $\mathcal{G}$ of pairs $(\Gamma,G)$, where $\Gamma$ is a pregeometry with all rank 2 truncations connected, and $G\leqslant\Aut\,\Gamma$ is transitive on the set of elements of each type.
The family $\mathcal G$ is closed under taking certain kinds of quotients, and our aim is to understand $\mathcal G$ by first determining the `basic' pairs in $\mathcal G$ (those that admit no non-degenerate quotients) and then describing how all other pairs in $\mathcal G$ are derived from these `basic' ones. 

We consider two kinds of quotient operations on pairs $(\Gamma,G)\in\mathcal G$, namely {\em imprimitive} quotients, where we take quotients with respect to $G$-invariant type-refining partitions of the element set of $\Gamma$, and {\em normal} quotients where we take quotients with respect to the orbits of a normal subgroup of $G$ (the latter is a special case of the former).
We find that it is appropriate to use slightly different definitions of degeneracy according to whether we are considering imprimitive or normal quotients, and this leads to correspondingly different concepts of `basic'. Thus we have respectively the concepts {\em $G$-primitive-basic} and {\em $G$-normal-basic} (defined formally in Subsections \ref{primdefs} and \ref{sec:ndegen} respectively).  

Theorem \ref{primitivebasicthm} shows that the study of $G$-primitive-basic pregeometries reduces to examining those where, for each type in the pregeometry, $G$ is faithful and primitive on the set of elements of that type. Theorem~\ref{normalbasicthm} shows that there is a similar reduction for $G$-normal-basic pregeometries to those where $G$ is faithful and quasiprimitive on the set of elements of each type.  (A permutation group is called {\em quasiprimitive} if every non-trivial normal subgroup is transitive; it is {\em primitive} if the only invariant partitions are trivial.)  
Since a pregeometry can be viewed as a multipartite graph, and the automorphism group of a pregeometry is the stabiliser in the graph automorphism group of each part in the multipartition, these results also provide information about automorphism groups of multipartite graphs.

\smallskip\noindent
{\bf Some notation:}\quad Given two pregeometries $\Gamma_1$ and $\Gamma_2$, the {\em direct sum} $\Gamma_1 \oplus \Gamma_2$ is (loosely speaking) the disjoint union of the two pregeometries with complete incidence between the two element sets (see Section \ref{decomps}).  A pregeometry is called {\em indecomposable} if it is not the direct sum of smaller pregeometries.  We denote by ${\mathcal Q}$ the subset of all pairs $(\Gamma,G)\in \mathcal{G}$ such that $G$ is faithful and quasiprimitive on the set of elements of each type; and we use $\Gamma(k,m)$ to denote the pregeometry whose incidence graph is the complete multipartite graph $K_{k \times m}$ with $k$ parts of size $m$.  These (and other) special terms and notation in the theorems are explained further in Sections \ref{incgeoms} and \ref{decomps} and just before Proposition \ref{basicpregeoms}.

\begin{theorem} \label{primitivebasicthm}
Let $(\Gamma,G)\in\mathcal{G}$ such that $\Gamma$ is $G$-primitive-basic.  Then $\Gamma$ admits a unique decomposition $\Gamma_1 \oplus \ldots \oplus \Gamma_\ell$ where for each $i$, $\Gamma_i$ is indecomposable and the group $G^{\Gamma_i}$ induced on $\Gamma_i$ is faithful and primitive on the set of elements of each type in $\Gamma_i$.
\end{theorem}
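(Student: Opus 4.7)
The plan is to invoke the unique direct-sum decomposition of a pregeometry into indecomposable summands (presumably developed in Section~\ref{decomps}) to write $\Gamma=\Gamma_1\oplus\cdots\oplus\Gamma_\ell$, and then use the $G$-primitive-basic hypothesis to force faithfulness and primitivity of each induced action. The first observation is that the type sets of distinct direct summands must be disjoint, because complete incidence between $\Gamma_i$ and $\Gamma_j$ would otherwise create incidences between same-type elements, violating the pregeometry axioms. Consequently each $\Gamma_i$ is setwise fixed by $G$ (no element of $G$ can move an element to one of a different type), so the induced group $G^{\Gamma_i}\leqslant\Aut\,\Gamma_i$ is well-defined and, by the transitivity hypothesis on $(\Gamma,G)\in\mathcal{G}$, is transitive on the set of elements of each type in $\Gamma_i$. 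Uniqueness of the collection $\{\Gamma_1,\ldots,\Gamma_\ell\}$ is immediate from the uniqueness part of the decomposition lemma.

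It remains to prove that each $G^{\Gamma_i}$ is faithful and primitive on every type of $\Gamma_i$; both arguments proceed by contraposition against the $G$-primitive-basic assumption. For faithfulness, suppose the kernel $K_i$ of the action of $G$ on $\Gamma_i$ is non-trivial. Since $K_i\trianglelefteq G$, its orbits on the element set of $\Gamma$ form a $G$-invariant partition, and since $K_i$ preserves types, this partition is type-refining; on $\Gamma_i$ all parts are singletons, but on some $\Gamma_j$ with $j\neq i$ there must be a non-singleton orbit. For primitivity, suppose that for some type $t$ in some $\Gamma_i$ there is a non-trivial $G^{\Gamma_i}$-invariant partition $\Pi$ of the set of elements of type $t$ in $\Gamma_i$; then using $\Pi$ on type $t$ and singleton parts on all other types produces a $G$-invariant type-refining partition of the full element set of $\Gamma$ (invariance uses that the types are disjoint across summands and that $G$ preserves each $\Gamma_i$).

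In each case the construction yields a candidate imprimitive quotient of $(\Gamma,G)$, and the main obstacle is to verify that this quotient is non-degenerate in the sense defined in Subsection~\ref{primdefs}. One has to rule out that the resulting partition coincides with, or arises trivially from, the degenerate partitions that come directly from the direct-sum decomposition itself (the one-part-per-type partition or the all-singletons partition), and this will require combining the indecomposability of each $\Gamma_i$ with the connectivity of the rank $2$ truncations and the transitivity of $G$ on each type. Once non-degeneracy is verified in both arguments, each produces a non-degenerate imprimitive quotient of $(\Gamma,G)$, contradicting the $G$-primitive-basic hypothesis and completing the proof.
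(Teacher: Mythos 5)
The primitivity claim and the uniqueness of the decomposition are handled essentially as in the paper (primitivity of $G^{\Gamma_i}$ on a type $X_t$ of $\Gamma_i$ is the same as primitivity of $G^{X_t}$, and your singleton-extension of a non-trivial block system does yield a proper, non-primitive-degenerate quotient; the uniqueness argument you defer to a ``decomposition lemma'' is not actually stated in Section~\ref{decomps}, but it is true and is proved inside the paper's Corollary following Lemma~\ref{lem:decompimplyfullyfaith}). The genuine gap is in your faithfulness argument, and it is twofold. First, you set out to prove the wrong statement: you assume the kernel $K_i$ of $G$ on $\Gamma_i$ is non-trivial and try to derive a contradiction, but the theorem does not claim $G$ is faithful on $\Gamma_i$ --- it claims the \emph{induced} group $G^{\Gamma_i}=G/K_i$ is faithful on each type of $\Gamma_i$. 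The statement you are contradicting is simply false in general: in the paper's own example, $G=\PSL(d,q)\times\PSL(d,q)$ acts on $\Gamma_1\oplus\Gamma_2$ with one direct factor acting trivially on each summand, so $K_1\neq 1$, yet the theorem's conclusion holds.

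Second, even as a formal argument your construction cannot produce the required contradiction. The partition you build is the orbit partition of a non-trivial normal subgroup $N$ (your $K_i$, or more relevantly $G_{(X_j)}$ for a type $j$ of $\Gamma_i$). On any type $X_t$ on which $N$ acts non-trivially, $N^{X_t}$ is a non-trivial normal subgroup of the primitive group $G^{X_t}$ and is therefore \emph{transitive}, so that type collapses to a single point in the quotient. Hence the quotient is primitive-degenerate, which is exactly what the definition of $G$-primitive-basic permits; no contradiction follows. This is why the paper's Lemma~\ref{lem:decompimplyfullyfaith} argues against \emph{indecomposability} rather than against the basic hypothesis: if $T_j=G_{(X_j)}$ acts non-trivially on $\Gamma_i$, then $T_j$ is transitive on every type of $\Gamma_i$ it moves while fixing $X_j$ pointwise, and Lemma~\ref{lem:faithpart} together with connectivity of the rank~$2$ truncations forces complete incidence between the fixed and moved types, exhibiting $\Gamma_i$ as a direct sum and contradicting its indecomposability. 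Your proof needs to be restructured around that mechanism; the ``verify non-degeneracy'' step you flag as the main obstacle is not merely unfinished but impossible in the form you propose.
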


\begin{theorem} \label{normalbasicthm}
Let $(\Gamma,G)\in\mathcal{G}$ such that $\Gamma$ is $G$-normal-basic of rank $k$.  Then exactly one of the following holds for some integers $m,m'$.
\begin{itemize}
\item[(i)] $(\Gamma,G) \in {\mathcal Q}$;
\item[(ii)] $\Gamma = \Gamma_0 \oplus \Gamma'$ where $(\Gamma_0,G^{\Gamma_0}) \in {\mathcal Q}$, $G\cong G^{\Gamma_0}$, and either
\begin{itemize}
\item[(a)] $\Gamma_0$ has rank $k - 1\geq 1$, and $\Gamma' = \Gamma(1,m)$, or
\item[(b)] $\Gamma_0$ has rank $k - 2\geq 2$, and $\Gamma' = \Gamma(1,m) \oplus \Gamma(1,m')$;
\end{itemize}
\item[(iii)] $k = 2$ and $\Gamma=\Gamma(1,m)\oplus\Gamma(1,m')$; or $k=3$ and $\Gamma=\Gamma(1,m)\oplus\Gamma(1,m/a)\oplus\Gamma(1,m/b)$ for some $a,b$ dividing $m$; or  $\Gamma = \Gamma(k,m)$ with $3\leq k\leq m+1$;
\item[(iv)] $\Gamma$ has a rank $k-1$ truncation $\Gamma_0$ with $(\Gamma_0,G^{\Gamma_0}) \in {\mathcal Q}$, $G\cong G^{\Gamma_0}$, and $G$ is faithful but not quasiprimitive on the excluded subset of elements.
\end{itemize}
\end{theorem}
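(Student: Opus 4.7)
The plan is to analyse the pair $(\Gamma,G)$ type-by-type, identifying failures of faithfulness and quasiprimitivity and showing that the $G$-normal-basic hypothesis forces one of the four listed configurations. For each type $i \in \{1,\ldots,k\}$ I write $V_i$ for the set of elements of type $i$, $K_i$ for the kernel of $G$ on $V_i$, and $G^{V_i}$ for the induced permutation group. If $K_i = 1$ and $G^{V_i}$ is quasiprimitive for every $i$, then by definition $(\Gamma,G) \in \mathcal{Q}$ and case (i) holds. So I would suppose we are not in case (i) and select a minimal non-trivial normal subgroup $N$ of $G$ witnessing the failure, namely either $N \leq K_i$ for some $i$, or $N$ is non-trivially intransitive on some $V_j$.

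The $N$-orbits form a $G$-invariant type-refining partition, giving a normal quotient $\Gamma_N$. Since $\Gamma$ is $G$-normal-basic, $\Gamma_N$ must be normal-degenerate in the sense of Subsection~\ref{sec:ndegen}. The crux is to extract from that definition that at most two types of $\Gamma$ can be non-trivially affected by $N$, and that on those types the $N$-orbit structure is very rigid --- typically a single orbit, or a partition into classes all of the same small size. I would then exploit the minimality of $N$ to show that the alternatives partition cleanly into the listed four.

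More concretely, if exactly one type $V_j$ is affected (with $N$ acting non-trivially on the quotient) and the truncation $\Gamma_0$ obtained by removing $V_j$ satisfies $(\Gamma_0,G^{\Gamma_0}) \in \mathcal{Q}$, then either $N \leq K_j$ (so $G$ acts trivially on $V_j$), forcing a direct-sum decomposition $\Gamma = \Gamma_0 \oplus \Gamma(1,m)$ as in case (ii)(a), or $N$ acts faithfully on $V_j$ but intransitively, giving case (iv). When two types are affected in the direct-summand manner, case (ii)(b) emerges. Finally, when enough types degenerate to trivialise most of the structure, the geometry collapses into one of the forms of case (iii), and this is where one verifies that the complete multipartite pregeometries $\Gamma(k,m)$ with $3 \leq k \leq m+1$ are precisely the boundary configurations that remain $G$-normal-basic.

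The main obstacle, as I expect it, will be distinguishing cases (ii) and (iv) cleanly and ruling out hybrid possibilities. Both single out one or two \emph{special} types, but (ii) requires a genuine direct-sum decomposition of $\Gamma$, which in turn needs $G$ to act trivially on the special types, whereas (iv) demands a faithful but non-quasiprimitive action. Pinning this down requires simultaneously tracking normal subgroups $N$ both in the kernel role $N \leq K_j$ and in the intransitive-normal-subgroup role across all types, and showing that the $G$-normal-basic property prevents these two mechanisms from coexisting except through the explicit rank $k = 2$ and $k = 3$ boundary configurations in (iii).
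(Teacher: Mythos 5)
Your outline captures the correct high-level strategy --- classify types by whether $G$ is unfaithful, faithful-but-not-quasiprimitive, or faithful-and-quasiprimitive on them, and match configurations of ``bad'' types to the four cases --- and your reading of the basic hypothesis is essentially right (though note the definition forces each nontrivial normal subgroup to be transitive on all but at most \emph{one} of the $X_i$, not two). However, the proposal skips the two arguments that carry all the weight. First, you never explain why the number of bad types is so small. The paper needs (a) that at most one \emph{faithful} type can fail to be quasiprimitive (Lemma~\ref{lem:qp}: two such types would give distinct intransitive normal subgroups $N_i, N_j$ with $N_i\cap N_j=1$, so each centralises the other, and the Centraliser Lemma forces $|N_j|<|X_j|\le|N_i|<|X_i|\le|N_j|$, a contradiction), and (b) that the set of unfaithful or non-quasiprimitive types injects into the set of minimal normal subgroups of $G$, combined with the fact that a group with a faithful quasiprimitive action has at most two minimal normal subgroups; moreover when equality $|I_{\rm unf}\cup I_{\rm nonqp}|=2$ holds with a quasiprimitive type present, a projection argument shows both bad types are actually unfaithful, which is exactly what separates case (ii)(b) from a hybrid of (ii) and (iv). Without these counting arguments you cannot rule out, say, two faithful non-quasiprimitive types, or one unfaithful and one non-quasiprimitive type coexisting, so the ``clean partition into the four cases'' is asserted rather than proved.

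Second, the complete multipartite branch (case (iii)) is not a boundary verification but the bulk of the technical work, and your proposal leaves it entirely open. To get $\Gamma=\Gamma(k,m)$ with $3\le k\le m+1$ one must show: the kernels $T_i=G_{(X_i)}$ pairwise intersect trivially, hence centralise each other and are regular and mutually isomorphic on the remaining types (Centraliser Lemma again); for $k\ge 4$ they are forced to be abelian, hence elementary abelian minimal normal subgroups $\mathbb{Z}_p^d$ with $m=p^d$; each $T_\ell$ ($\ell\ne i,j$) is a full diagonal subgroup of $T_i\times T_j$; and a linear-algebra argument with Schur's Lemma applied to the conjugation action of $G$ on $T_i\times T_j\cong V'\oplus V'$ bounds the number of such diagonals, giving $k\le p^d+1$. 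The $k=3$ shape $\Gamma(1,m)\oplus\Gamma(1,m/a)\oplus\Gamma(1,m/b)$ likewise comes from a specific regularity/divisibility analysis of the two kernels acting on the third type. None of this is reachable from the quotient-degeneracy observation alone, so as it stands the proposal establishes the case division in spirit but proves neither the exclusivity of the cases nor the explicit structure claimed in (iii).
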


We prove a little more for some of these cases in Section \ref{compmultsect}. In particular, if $k\geq4$ in case (iii), then $m$ is a prime power, and the upper bound $k=m+1$ can be achieved for each such $m$, see Example~\ref{abelianpregeom}. We note that, for a $G$-normal quotient of a $G$-incidence transitive pregeometry, incidence in the original pregeometry has a certain uniformity relative to the quotient: for each ordered pair $(i,j)$ of types, there is a constant $k_{ij}$ such that, whenever parts $B_i, B_j$ (consisting of type $i$ and type $j$ elements respectively) are incident in the quotient, each element of $B_i$ is incident with exactly $k_{ij}$ elements of $B_j$ (see \cite[Lemma 6.6]{CGP}). For some applications this uniformity is important, and it does not hold for $G$-imprimitive quotients in general. For such applications only Theorem~\ref{normalbasicthm} can be used.

For a pregeometry to be a geometry each flag must be contained in a chamber (that is, a flag containing an element of each type). If $(\Gamma,G)\in\mathcal G$ with $\Gamma$ a geometry, then in general neither an imprimitive quotient nor a normal quotient of $\Gamma$ can be guaranteed to be a geometry, see \cite{CGP}. Several sufficient conditions were obtained in \cite{CGP} for quotients of geometries to be geometries. 
In a forthcoming paper \cite{secondpaper} we will pursue the study of $G$-primitive basic geometries $\Gamma$, showing that, for each O'Nan--Scott type of primitive group $G$, there are examples of thick geometries $\Gamma$ of unboundedly large rank.


\subsection{Outline of the paper}
Section \ref{prelim} covers preliminary theory of pregeometries including imprimitive and normal quotients, and also some results on permutation group actions.  In Section \ref{primitivebasicproof} we prove Theorem \ref{primitivebasicthm}, and in Section \ref{actions} we give a number of results concerning group actions on normal-basic pregeometries.  We then use these results to prove Proposition \ref{basicpregeoms} which lists the ways in which a group $G$ can act on a normal-basic pregeometry (and which proves most of Theorem \ref{normalbasicthm}).  In Section \ref{compmultsect} we deal explicitly with case (iii) of Proposition \ref{basicpregeoms} (namely when the incidence graph of the pregeometry is complete multipartite), giving detailed information about the automorphism group actions in this case. This then enables us to prove Theorem \ref{normalbasicthm}.

\section{Preliminaries} \label{prelim}
\subsection{Incidence pregeometries} \label{incgeoms}
By a {\em pregeometry} $\Gamma = (X,\ast,t)$ we mean a set $X$ of {\em elements} (often called {\em points}) equipped with an {\em incidence relation} $\ast$ on the points, and a map $t$ from $X$ onto a set $I$ of {\em types}.  The incidence relation is symmetric and reflexive, and if $x \ast y$ we say that $x$ and $y$ are {\em incident}.  Furthermore if $x \ast y$ with $x \neq y$ then $t(x) \neq t(y)$.  For each $i \in I$, we write $X_i$ to denote $t^{-1}(i)$, the set of all elements of type $i$, and so $X$ is the disjoint union $\bigcup_{i \in I} X_i$.  The number of types $|I|$ is called the {\em rank} of the pregeometry.  We assume throughout the paper that $|X|$, and hence $|I|$, is finite.  Unless stated otherwise, the set $I$ for a rank $k$ pregeometry is equal to $\{1,\ldots,k\}$.

A typical example is a projective space: the elements are the non-trivial proper subspaces of a vector space, two subspaces are incident if one is contained in the other, and given a subspace $\alpha$, the type $t(\alpha)$ is the algebraic dimension of $\alpha$.

For a pregeometry $\Gamma = (X,\ast,t)$, and a nonempty subset $J$ of the type set $I$, the $J$-truncation of $\Gamma$ is the pregeometry $(X_J,\ast_J,t_J)$ where $X_J = t^{-1}(J)$, $\ast_J$ is the restriction of $\ast$ to $X_J\times X_J$, and $t_J$ is the restriction of $t$ to $X_J$.

The {\em incidence graph} of $\Gamma$ is the graph with vertex set $X$, and edges $\{x_i,x_j\}$ whenever $x_i \ast x_j$ and $i \neq j$, where $x_i$ has type $i$ and $x_j$ has type $j$.  Since different elements of the same type are never incident, such a graph is always multipartite.  If the incidence graph is complete multipartite (that is, $\{x_i,x_j\}$ is an edge whenever $i \neq j$), then we say that the pregeometry $\Gamma$ is itself complete multipartite.  Furthermore, we use the notation $\Gamma(k,m)$ to mean the rank $k$ complete multipartite pregeometry in which $|X_i| = m$ for all $i$.

Throughout the paper we study pregeometries $\Gamma$ in which the rank $2$ truncations are connected.  This means that for each pair of distinct types $i,j$, the incidence graph of the $\{i,j\}$-truncation of $\Gamma$ is connected.

Let $\Gamma = (X,\ast,t)$ be an incidence pregeometry.  An {\em automorphism} of $\Gamma$ is a permutation $g$ of $X$ such that $t(x) = t(x^g)$ for all $x \in X$, and $x^g \ast y^g$ if and only if $x \ast y$, for all $x, y \in X$.  We write $\Aut\Gamma$ for the automorphism group of $\Gamma$. As in Section \ref{intro}, we write $\mathcal{G}$ for the set of all pairs $(\Gamma,G)$ where $\Gamma$ is a pregeometry with all rank 2 truncations connected, and $G\leqslant\Aut(\Gamma)$ is transitive on each $X_i$.

\subsection{Permutation groups} \label{pergroups}
In this section we introduce some notation and results pertaining to group actions.  Let $G$ be a group acting on a set $Y$.  We write $G_{(Y)}$ for the kernel of the $G$-action on $Y$, and $G^{Y}$ for the group induced by $G$ on $Y$ (isomorphic to $G/G_{(Y)}$).  The action of $G$ on $Y$ is called {\em faithful} if $G_{(Y)}=1$, so that $G^Y \cong G$.  For $y \in Y$, we write $y^G$ for the orbit of $y$ under $G$.  The action of $G$ on $Y$ is called {\em semi-regular} if for all $y \in Y$, the stabiliser $G_y\leqslant G_{(Y)}$.  It is called {\em regular} if it is both semi-regular and transitive.

For a subgroup $T$ of $G$, the {\em centraliser} $C_G(T)$ of $T$ in $G$ is the set of all $g \in G$ which commute with every $t \in T$.  The following result is taken from \cite[Theorem 4.2A]{DM}.

\begin{lemma}[Centraliser Lemma] \label{centlem}
Let $T$ be a transitive subgroup of a permutation group $G$ on $Y$, and let $S \leq C_G(T)$.  Then $S$ is semi-regular on $Y$.  Moreover, if $S$ is transitive on $Y$, then $S$ is regular, $S = C_G(T)$, $T \cong S$, and $T$ is also regular on $Y$.  If in addition $T$ is abelian, then $T = S$. 
\end{lemma}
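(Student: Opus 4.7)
The plan is to address the four assertions in the order stated. First, to see that $S$ is semi-regular, I would fix $s \in S_y$ for some $y \in Y$ and use the transitivity of $T$ to write an arbitrary $z \in Y$ as $z = y^t$ for some $t \in T$. Since $s \in C_G(T)$, we have $z^s = y^{ts} = y^{st} = (y^s)^t = y^t = z$, so $s$ fixes every point of $Y$ and hence lies in the kernel $G_{(Y)}$; that is, $S_y \leqslant G_{(Y)}$, which is precisely the semi-regularity condition.

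Now suppose in addition that $S$ is transitive; then $S$ is regular by the definition of regularity. To prove $S = C_G(T)$, I would first note that the argument above applies verbatim to the full centraliser $C_G(T)$ (since $C_G(T)$ also centralises $T$), so $C_G(T)$ is itself semi-regular on $Y$. Given $c \in C_G(T)$ and a fixed base point $y$, the transitivity of $S$ supplies some $s \in S$ with $y^c = y^s$; then $cs^{-1}$ stabilises $y$ and still lies in $C_G(T)$, so by semi-regularity it acts trivially on $Y$. As $G$ is a permutation group on $Y$, this forces $cs^{-1} = 1$, i.e.\ $c = s \in S$.

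For the remaining assertions, I would use the symmetry of commutation: $S \leqslant C_G(T)$ if and only if $T \leqslant C_G(S)$, so applying Step 1 with the roles of $S$ and $T$ interchanged (which is legitimate since $S$ is now transitive) shows that $T$ is semi-regular, hence regular as it is already transitive. To obtain $T \cong S$ I would invoke the standard fact that a regular subgroup of $\Sym(Y)$ has centraliser in $\Sym(Y)$ isomorphic to itself, realised via the left and right regular actions on $T$; then $S = C_G(T) \leqslant C_{\Sym(Y)}(T) \cong T$, and since $|S| = |Y| = |T|$ the matching of cardinalities yields $S \cong T$. Finally, if $T$ is abelian then $T \leqslant C_G(T) = S$, so the order equality forces $T = S$. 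I do not expect a serious obstacle; the step requiring the most care is the regular-centraliser identification $C_{\Sym(Y)}(T) \cong T$, where one must be precise about whether one is speaking of abstract isomorphism of groups or equality of subgroups of $\Sym(Y)$.
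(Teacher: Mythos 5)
Your proof is correct, and the paper itself does not prove this lemma---it simply cites \cite[Theorem 4.2A]{DM}, whose standard proof proceeds exactly as you do: the commuting trick $y^{ts}=y^{st}$ for semi-regularity, transitivity of $S$ to identify $S$ with the (semi-regular) full centraliser, symmetry of centralising to get regularity of $T$, and the identification of the centraliser of a regular group with its opposite regular representation for $T\cong S$. The one step you rightly flag as needing care, namely $C_{\Sym(Y)}(T)\cong T$ for $T$ regular, is genuinely needed (two regular groups of the same degree need not be isomorphic without it) and is handled correctly.
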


Our next result is a generalisation of \cite[Lemma 5.4]{GLP3}, which dealt with the case where $k=2$.

\begin{lemma} \label{lem:qp}
Suppose that $G$ acts on a set $X$ with orbits $X_1,X_2,\ldots,X_k$ such that, for all nontrivial normal subgroups $N$ of $G$, $N$ is transitive on all but at most one $G$-orbit. Then $G$ is quasiprimitive on all but at most one $G$-faithful orbit.
\end{lemma}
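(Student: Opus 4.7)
The plan is to argue by contradiction. Suppose there are two distinct $G$-faithful orbits $X_1, X_2$ on both of which $G$ fails to be quasiprimitive. My first step is to produce witnessing normal subgroups: for $i=1,2$, non-quasiprimitivity on $X_i$ gives a nontrivial normal subgroup of $G^{X_i}$ that is intransitive on $X_i$, and faithfulness ($G_{(X_i)} = 1$, so $G^{X_i} \cong G$) lifts this to a nontrivial normal subgroup $N_i$ of $G$ with $N_i$ intransitive on $X_i$. The hypothesis, applied to $N_1$ and $N_2$, then forces $N_1$ to be transitive on every orbit other than $X_1$, and $N_2$ on every orbit other than $X_2$; in particular each $N_i$ is transitive on the other orbit $X_{3-i}$.

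Next I would dispose of the intersection. If $N_1 \cap N_2 \neq 1$, it is a nontrivial normal subgroup of $G$; being contained in $N_i$ it cannot move elements between distinct $N_i$-orbits on $X_i$, so it is intransitive on both $X_1$ and $X_2$, contradicting the hypothesis. Hence $N_1 \cap N_2 = 1$, and since both subgroups are normal in $G$ this yields $[N_1,N_2] \leq N_1 \cap N_2 = 1$, so $N_1$ and $N_2$ centralise each other.

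Working inside the faithful permutation group $G^{X_1} \cong G$, I would apply the Centraliser Lemma to $T = N_2$ (transitive on $X_1$) and $S = N_1 \leq C_G(N_2)$, concluding that $N_1$ acts semi-regularly on $X_1$. Faithfulness forces the point-stabilisers $(N_1)_y$ to lie in $(N_1)_{(X_1)} \leq G_{(X_1)} = 1$, so every $N_1$-orbit on $X_1$ has size $|N_1|$, and intransitivity gives $|X_1| \geq 2|N_1|$. Symmetrically $|X_2| \geq 2|N_2|$. Transitivity of $N_1$ on $X_2$ and $N_2$ on $X_1$ yields $|X_2| \leq |N_1|$ and $|X_1| \leq |N_2|$, so
\[ |X_1| \geq 2|N_1| \geq 2|X_2| \geq 4|N_2| \geq 4|X_1|, \]
the desired contradiction.

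The only step I expect to require any care is the initial lifting: the hypothesis is phrased in terms of normal subgroups of $G$, so one must use the faithfulness of the $X_i$-action to ensure that the witnessing normal subgroups of $G^{X_i}$ pull back to nontrivial normal subgroups of $G$. Once that is set up, the Centraliser Lemma together with a short counting argument do all the work.
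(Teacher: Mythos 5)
Your argument is correct and follows essentially the same route as the paper's proof: produce intransitive nontrivial normal subgroups $N_1,N_2$ witnessing non-quasiprimitivity, show $N_1\cap N_2=1$ so that they centralise each other, apply the Centraliser Lemma to get semiregularity, and derive a contradiction from the resulting inequalities between $|N_i|$ and $|X_i|$. The only cosmetic difference is that the paper closes with the chain $|N_2|<|X_2|\leq|N_1|<|X_1|\leq|N_2|$ rather than your factor-of-two version; both are valid.
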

\begin{proof}
Suppose that there exist distinct $i, j$ such that $G$ is faithful but not quasiprimitive on $X_i$ and $X_j$. Then there exist nontrivial normal subgroups $N_i,N_j$ of $G$ such that $N_i^{X_i}$ and $N_j^{X_j}$ are intransitive and nontrivial. Since each of $N_i$ and $N_j$ is transitive on all but at most one $G$-orbit, it follows that $N_i \neq N_j$ and $N_i^{X_j},N_j^{X_i}$ are transitive. Now  $N_i\cap N_j\norml G$ and is intransitive on both $X_i$ and $X_j$. Hence $N_i\cap N_j=1$  and so $N_j\leqslant C_G(N_i)$ and $N_i\leqslant C_G(N_j)$. By the faithfulness of $G$ on $X_i,X_j$ and by Lemma \ref{centlem}, we have that $N_j$ acts semiregularly and intransitively on $X_j$ and $N_i$ acts semiregularly and intransitively on $X_i$. Thus $|N_j| < |X_j|$ and $|N_i| < |X_i|$. Moreover, by transitivity we have that $|X_j|$ divides $|N_i|$ and $|X_i|$ divides $|N_j|$. Thus $|N_j|< |X_j|\leq |N_i| < |X_i|\leq |N_j|$, which is a contradiction. It follows that $G$ is quasiprimitive on all but at most one $G$-faithful $X_i$.
\end{proof}

\subsection{Quotients of incidence pregeometries}
Let $\Gamma = (X,\ast,t)$ be a pregeometry and let ${\mathcal P}$ be a type-refining partition of $X$; that is, each part of ${\mathcal P}$ is contained in one of the sets $X_i$.  Then the {\em quotient} of $\Gamma$ by ${\mathcal P}$ is the pregeometry $\Gamma_{/\mathcal P} = ({\mathcal P},\ast_{/\mathcal P},t_{/\mathcal P})$ where for $P,P' \in {\mathcal P}$ we have $P \ast_{/\mathcal P} P'$ if and only if there exists $x \in P$ and $y \in P'$ with $x \ast y$; and $t_{/ P}(P) = t(x)$ for $x \in P$.  We say that  $\Gamma_{/\mathcal P}$ is a proper quotient if $|{\mathcal P}| < |X|$.

Let $G \leq \Aut\Gamma$.  A partition $\mathcal{P}$ of $X$ is $G$-invariant if $G$ permutes the parts of $\mathcal{P}$ setwise. If ${\mathcal P}$ is a type-refining $G$-invariant partition of $X$ then $\Gamma_{/\mathcal P}$ is called a $G$-{\em imprimitive} quotient of $\Gamma$, or simply an imprimitive quotient if the group is clear from the context.

Let $N$ be a normal subgroup of $G$, and let ${\mathcal P}_{N}$ be the set of $N$-orbits on $X$.  Then ${\mathcal P}_{N}$ is a $G$-invariant partition of $X$, and the $G$-{\em normal quotient}, or simply normal quotient, of $\Gamma$ by $N$ is the pregeometry $\Gamma_{/{\mathcal P}_N}$.  We usually abbreviate the notation to $\Gamma_{/N}$.

As mentioned in the introduction, the motivation for using quotients to study pregeometries is that it gives a framework for characterising pregeometries as being built up from `basic' examples (that is, examples which admit no `non-degenerate' quotients).  As the terms `degenerate' and `basic' have slightly different meaning in the contexts of imprimitive and normal quotients, we give their definitions separately in Sections \ref{primdefs} and \ref{sec:ndegen}.

\subsection{Decomposition of pregeometries} \label{decomps}
When characterising imprimitive-basic and normal-basic pregeometries we use the notion of indecomposability.  For this we use the direct sum of pregeometries (as it appears in \cite[p 80]{IncGeomHB}).  
\begin{constr} {\em
Let $\Gamma_i = (X^{(i)},\ast^{(i)},t^{(i)})$ with type set $I^{(i)}$ for $i = 1,2$.  The {\em direct sum} of $\Gamma_1$ and $\Gamma_2$ is 
$$\Gamma_1 \oplus \Gamma_2 = (X,\ast,t)$$ where 
$X = X^{(1)} \dotcup X^{(2)}$, $I = I^{(1)} \dotcup I^{(2)}$, $t$ induces $t^{(i)}$ on $X^{(i)}$, $\ast$ induces $\ast^{(i)}$ on the restriction to $\Gamma_i$, and $x^{(1)} \ast x^{(2)}$ for all $x^{(1)} \in X^{(1)}$ and $x^{(2)} \in X^{(2)}$.
}
\end{constr}

Note that $\Aut(\Gamma_1 \oplus \Gamma_2)$ is equal to the direct product $\Aut\Gamma_1 \times \Aut\Gamma_2$.

We say that a pregeometry $\Gamma$ is {\em decomposable} if there exist $\Gamma_1$ and $\Gamma_2$ such that $\Gamma = \Gamma_1 \oplus \Gamma_2$.  If $\Gamma$ is not decomposable then it is called {\em indecomposable}.

\section{Primitive-degenerate and primitive-basic pregeometries}
Throughout this section $\Gamma = (X,\ast,t)$ denotes a pregeometry with type set $I$ and $G \leq \Aut\Gamma$.  For $J \subseteq I$, we say that the $J$-truncation $\Gamma_J$ is {\em fully $G$-faithful}, or {\em fully $G$-primitive} if the group $G^{\Gamma_J}$ induced by $G$ on $\Gamma_J$ is faithful on $X_j$ for each $j \in J$, or primitive on $X_j$ for each $j \in J$, respectively.

\subsection{Definitions} \label{primdefs}
We use the term {\em effective rank} of a pregeometry to mean the number of types which have more than one element.  For our theory of imprimitive quotients we use proper quotients of a pregeometry having the same effective rank as the original pregeometry, or equivalently, no part in the partition should be equal to an entire set $X_i$ in the original pregeometry.

Provided $G$ leaves invariant a proper, non-trivial partition ${\mathcal P}_i$ of at least one of the $X_i$, we can always extend ${\mathcal P}_i$ to a partition ${\mathcal P}$ of $X$ such that ${\mathcal P}$ partitions each of the remaining $X_j$ into singleton subsets (irrespective of the $G$-action on $X_j$), giving a proper quotient pregeometry with the same effective rank.

We say that a pregeometry $\Gamma$ is {\em primitive-degenerate} if at least one of the $X_i$ contains only one element.  The corresponding notion of basic, for a pregeometry $\Gamma$ with $G \leq \Aut\Gamma$, is as follows:  $\Gamma$ is called {\em $G$-primitive-basic} if $\Gamma$ is not primitive-degenerate but every proper $G$-imprimitive quotient is primitive-degenerate.  The latter condition is equivalent to requiring that $\Gamma$ is fully $G$-primitive.

\subsection{Proof of Theorem \ref{primitivebasicthm}} \label{primitivebasicproof}

We begin with the following lemma.
\begin{lemma}
\label{lem:faithpart}
Suppose that $\Gamma=(X,*,t)$ is a pregeometry of rank at least two.  Suppose also that $G\leq \Aut\,\Gamma$, $G$ is transitive on each $X_i$, and $1\neq N\norml G$ such that $N$ acts trivially on $X_1$ and transtively on $X_j$ for some $j\neq 1$. Assume that there exist $x_1 \in X_1$ and $x_j \in X_j$ with $x_1 \ast x_j$.  Then each element of $X_1$ is incident with each element of $X_j$.
\end{lemma}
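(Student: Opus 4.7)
The statement is essentially a direct consequence of the hypotheses, so my plan will be short and split into two steps: first show that the single element $x_1$ is incident with every element of $X_j$, then propagate this to all of $X_1$ using $G$-transitivity.

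For the first step, the plan is to exploit the two properties of $N$ simultaneously. Since $N$ acts trivially on $X_1$, we have $x_1^n = x_1$ for every $n \in N$. Since $N \leq \Aut\,\Gamma$, incidence is preserved, so applying any $n \in N$ to the given incidence $x_1 \ast x_j$ yields $x_1 = x_1^n \ast x_j^n$. Thus $x_1$ is incident with every element of the $N$-orbit $x_j^N$, and the transitivity of $N$ on $X_j$ forces $x_j^N = X_j$. Hence $x_1 \ast z$ for every $z \in X_j$.

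For the second step, I would use the fact that $G$ is transitive on $X_1$ and consists of type-preserving automorphisms, so in particular $G$ permutes $X_j$. Given an arbitrary $y_1 \in X_1$, choose $g \in G$ with $x_1^g = y_1$. For any $z \in X_j$ we have $z^{g^{-1}} \in X_j$, so by step one $x_1 \ast z^{g^{-1}}$, and applying $g$ gives $y_1 = x_1^g \ast z$. Thus $y_1$ is incident with every element of $X_j$, which is the desired conclusion.

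There is no genuine obstacle here; the only thing to be careful about is that the argument uses neither the rank-2 truncation connectedness hypothesis nor the full force of the assumption that $G$ is transitive on every $X_i$ (only transitivity on $X_1$ and $X_j$ is needed). It is worth noting at the end that the lemma can be paraphrased as: under these hypotheses, the bipartite incidence graph between $X_1$ and $X_j$ is either empty or complete bipartite.
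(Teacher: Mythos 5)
Your proof is correct and follows essentially the same route as the paper's (one-sentence) argument: use the triviality of $N$ on $X_1$ together with its transitivity on $X_j$ to get $x_1 \ast z$ for all $z \in X_j$, then use $G$-transitivity on $X_1$ and preservation of incidence to propagate to all of $X_1$. Your closing observations (that connectedness of rank-2 truncations is not needed, and that the conclusion says the $X_1$--$X_j$ incidence graph is complete bipartite once nonempty) are accurate.
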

\begin{proof}
Let $x_1 \in X_1$ and $x_j \in X_j$ such that $x_1 * x_j$. Since $x_1^G=X_1$, $x_1^N=\{ x_1\}$, $x_j^{N} = X_j$ and $G$ preserves incidence, the result follows.
\end{proof}

\begin{lemma}
\label{lem:decompimplyfullyfaith}
Let $(\Gamma, G)\in\mathcal{G}$  and suppose that $\Gamma$ is fully $G$-primitive.  If $\Gamma$ is indecomposable then $\Gamma$ is fully $G$-faithful.
\end{lemma}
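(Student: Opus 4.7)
The plan is to prove the contrapositive: assume $\Gamma$ is not fully $G$-faithful, and produce a direct-sum decomposition. Concretely, suppose there is some type $i\in I$ with $K_i := G_{(X_i)} \neq 1$. Since $K_i$ is a nontrivial normal subgroup of $G$, and $G$ is primitive on each $X_j$, the standard fact that orbits of a normal subgroup form a $G$-invariant partition forces $K_i$ to act on each $X_j$ either trivially or transitively. This is the key structural dichotomy I want to exploit.

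Set $J_1 = \{j \in I : K_i \text{ acts trivially on } X_j\}$ and $J_2 = I \setminus J_1 = \{j \in I : K_i \text{ acts transitively on } X_j\}$. Both are nonempty: $i \in J_1$, while $K_i \neq 1$ means $K_i$ acts nontrivially, hence (by primitivity) transitively, on at least one $X_j$, giving $J_2 \neq \emptyset$. The aim is now to show $\Gamma = \Gamma_{J_1} \oplus \Gamma_{J_2}$, i.e.\ that every element of $X_{J_1}$ is incident with every element of $X_{J_2}$.

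Fix $a \in J_1$ and $b \in J_2$. The $\{a,b\}$-truncation of $\Gamma$ is connected by hypothesis (since $(\Gamma,G) \in \mathcal{G}$), and in particular has at least one edge, so there exist $x_a \in X_a$ and $x_b \in X_b$ with $x_a \ast x_b$. Now $K_i$ is a nontrivial normal subgroup of $G$ acting trivially on $X_a$ and transitively on $X_b$, and an incident pair exists, so Lemma \ref{lem:faithpart} applies and yields that every element of $X_a$ is incident with every element of $X_b$. As $a \in J_1$ and $b \in J_2$ were arbitrary, the incidence between $X_{J_1}$ and $X_{J_2}$ is complete, giving $\Gamma = \Gamma_{J_1} \oplus \Gamma_{J_2}$, which contradicts indecomposability.

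The main (mild) obstacle is justifying the trivial-or-transitive dichotomy from primitivity; everything else is bookkeeping. It is also worth noting that Lemma \ref{lem:faithpart} is stated with $X_1$ in the ``trivial action'' role, so one should record that relabelling the types is harmless before invoking it. No role is played by full $G$-primitivity beyond this dichotomy, so the argument is essentially complete once the partition $I = J_1 \dotcup J_2$ is set up.
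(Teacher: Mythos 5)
Your proposal is correct and follows essentially the same route as the paper: the paper also fixes a type $i$ with $T_i=G_{(X_i)}\neq 1$, splits $I$ into the types on which $T_i$ acts trivially versus (by primitivity) transitively, and uses connectivity of the rank~2 truncations together with Lemma~\ref{lem:faithpart} to get complete incidence across the split, hence a direct-sum decomposition. Your partition $I=J_1\dotcup J_2$ is exactly the paper's $I_i$ and $I'$, and your appeal to faithfulness of $G$ on $X$ to see $J_2\neq\emptyset$ matches the paper's argument.
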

\begin{proof}
Assume that $G$ is unfaithful on some set $X_i$.  Let $T_i = G_{(X_i)}$, and let $I_{i}$ be the set of all $j$ with $T_i \leq G_{(X_j)}$.  Let $I' = I \backslash I_{i}$.  Since $G$ is faithful on $X$, $|I'| \geq 1$.  Let $\Gamma_{i}$ be the $I_{i}$-truncation and $\Gamma'$ the $I'$-truncation of $\Gamma$.  Let $j \in I'$ and let $x \in \bigcup_{\ell \in I_i} X_\ell$.  Since the rank $2$ truncations are connected there exists $x_j \in X_j$ with $x \ast x_j$.  Since $G$ is primitive on $X_j$, the non-trivial normal subgroup $T_i$ of $G$ is transitive on $X_j$, while fixing the point $x$ in $\bigcup_{\ell \in I_i} X_\ell$.  Thus by Lemma \ref{lem:faithpart},  $x \ast x_j'$ for all $x_j' \in X_j$.  As this holds for all $x \in \bigcup_{\ell \in I_i} X_\ell$, it follows that $\Gamma = \Gamma_{i} \oplus \Gamma'$, and so $\Gamma$ is decomposable.
\end{proof}

\begin{corollary}
Let $(\Gamma, G)\in\mathcal{G}$ and suppose that $\Gamma$ is fully $G$-primitive.  Then there exists a unique partition ${\mathcal I} = \{I_1,\ldots,I_\ell\}$ of $I$ such that, for each $i$, the truncation $\Gamma_{I_i}$ is an indecomposable, fully $G$-faithful, fully $G$-primitive pregeometry, and $\Gamma = \Gamma_{I_1} \oplus \ldots \oplus \Gamma_{I_\ell}$.
\end{corollary}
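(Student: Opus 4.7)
The plan is to extract the partition $\mathcal{I}$ directly from the incidence structure of $\Gamma$, independent of the group $G$, and then recover the asserted group-theoretic properties via Lemma \ref{lem:decompimplyfullyfaith}. Define an auxiliary simple graph $H$ on vertex set $I$ by declaring distinct types $i,j$ to be adjacent precisely when there exist $x_i\in X_i$ and $x_j\in X_j$ with $x_i\not\ast x_j$. Let $\mathcal{I}=\{I_1,\ldots,I_\ell\}$ be the partition of $I$ into the connected components of $H$; this is the candidate partition, and $\Gamma=\Gamma_{I_1}\oplus\cdots\oplus\Gamma_{I_\ell}$ is the candidate decomposition.

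First I would check this is a genuine direct sum: if $a\neq b$, $i\in I_a$ and $j\in I_b$, then $i$ and $j$ are in different components of $H$, so by definition of $H$ every element of $X_i$ is incident with every element of $X_j$, which is the condition defining the direct sum. Next, each $\Gamma_{I_a}$ is indecomposable: if $\Gamma_{I_a}=\Gamma'\oplus\Gamma''$ with type-sets $J',J''$ partitioning $I_a$, then no edge of $H$ would join a vertex of $J'$ to a vertex of $J''$; combined with the fact that every edge of $H$ incident to a vertex of $I_a$ already lies inside $I_a$, this disconnects $I_a$ in $H$, contradicting that $I_a$ is a single component. I would then apply Lemma \ref{lem:decompimplyfullyfaith} to each $(\Gamma_{I_a}, G^{\Gamma_{I_a}})$: this pair lies in $\mathcal G$ because its rank 2 truncations are rank 2 truncations of $\Gamma$ (hence connected) and $G^{\Gamma_{I_a}}$ inherits transitivity on each $X_j$ with $j\in I_a$; and $\Gamma_{I_a}$ is fully $G^{\Gamma_{I_a}}$-primitive since $G$ and $G^{\Gamma_{I_a}}$ have the same image in $\Sym(X_j)$ for every $j\in I_a$. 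Lemma \ref{lem:decompimplyfullyfaith} then yields full $G^{\Gamma_{I_a}}$-faithfulness, as required.

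For uniqueness, suppose $\Gamma=\Delta_1\oplus\cdots\oplus\Delta_m$ is any decomposition into indecomposable summands with type-sets $J_1,\ldots,J_m$. The direct-sum condition forces types in different $J_k$ to be non-adjacent in $H$, so each $J_k$ is contained in a single component of $H$. Conversely, exactly the argument used above to prove indecomposability of $\Gamma_{I_a}$ shows that if $J_k$ met two components of $H$, the graph restricted to $J_k$ would disconnect into two classes that are mutually fully incident, contradicting indecomposability of $\Delta_k$. Hence $\{J_1,\ldots,J_m\}=\mathcal{I}$. The only delicate point in the whole argument is the indecomposability step, where one must distinguish carefully between adjacency and connectivity in $H$: a priori a putative decomposition $J'\dotcup J''$ of $I_a$ might be bridged by a path in $H$ leaving $I_a$, but this possibility is excluded by the observation that there are no edges of $H$ between distinct components.
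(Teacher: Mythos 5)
Your proof is correct, and it takes a somewhat different route from the paper's. You make the partition explicit from the start: $\mathcal{I}$ is the set of connected components of the auxiliary graph $H$ on $I$ whose edges record pairs of types that are \emph{not} completely incident. This buys you a clean, simultaneous treatment of existence, indecomposability of the summands, and uniqueness, all reduced to elementary statements about components of $H$; the faithfulness claim is then delegated to Lemma~\ref{lem:decompimplyfullyfaith} exactly as in the paper (your verification that $(\Gamma_{I_a},G^{\Gamma_{I_a}})\in\mathcal{G}$ and that full $G$-primitivity passes to the truncation is the right thing to check and is correct). The paper instead asserts existence rather tersely from Lemmas~\ref{lem:faithpart} and~\ref{lem:decompimplyfullyfaith}, and proves uniqueness by intersecting the type-sets of two decompositions and invoking indecomposability twice ($I_n\subseteq J_j$ and then $J_j\subseteq I_{n'}$); your graph makes the same double containment visible as ``each component of $H$ lies in a single $J_k$'' and ``each $J_k$ lies in a single component.'' One small wording slip: in your uniqueness paragraph the clause ``types in different $J_k$ are non-adjacent in $H$, so each $J_k$ is contained in a single component'' draws the wrong conclusion from that hypothesis --- non-adjacency across parts shows that every edge of $H$ stays inside some $J_k$, hence that each \emph{component} is contained in a single $J_k$, not the reverse. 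This does no harm, because the reverse containment (each $J_k$ inside a single component) is exactly what your next sentence establishes via indecomposability of $\Delta_k$, and the two containments together with the fact that both families are partitions give $\{J_1,\ldots,J_m\}=\mathcal{I}$. I would just rephrase that one sentence so each implication is attached to its correct justification.
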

\begin{proof}
The existence of such a partition follows from Lemma \ref{lem:decompimplyfullyfaith}, Lemma \ref{lem:faithpart} and the fact that all rank 2 truncations are connected. Suppose that  $\Gamma=\Gamma_{J_1} \oplus \ldots \oplus \Gamma_{J_r}$ is another decomposition and let $n\in\{1,\ldots,\ell\}$.  Now $I_n=(I_n\cap J_1)\dotcup(I_n\cap J_2)\dotcup\ldots\dotcup(I_n\cap J_r)$. For $i=1,2,\ldots,r$, let $\Gamma_i$ be the $(I_n\cap J_i)$-truncation of $\Gamma_{I_n}$. Then $\Gamma_{I_n}=\oplus_{j\in J}\Gamma_{I_j}$, where $J=\{j\,\mid\, 1\leq j\leq r,\ I_n\cap J_j\ne\emptyset\}$. Since $\Gamma_{I_n}$ is indecomposable, it follows that $|J|=1$ so there exists $j\in\{1,\ldots,r\}$ such that $I_n\subseteq J_j$. Applying the same argument to $J_j$ implies that $I_n=J_j$. Hence the partition $\mathcal{I}$ is unique.
\end{proof}

Theorem \ref{primitivebasicthm} follows from the above Corollary.
The following example illustrates a decomposable example in which each of the $\Gamma_i$ is an ``interesting'' geometry.
\begin{example} {\em
Let $\Gamma$ be the geometry whose points are the elements of the projective space $\PG(d-1,q)$ and incidence is the usual incidence (that is, subspace inclusion) with the added incidence that every $1$-space and $2$-space is incident with every subspace of dimension at least $3$.  Let $i$ be the type assigned to the $i$-spaces; so $X_i$ is the set of all $i$-spaces.

Let $G = \PSL(d,q) \times \PSL(d,q)$ act on this geometry where the first factor acts non-trivially on $X_1$ and $X_2$ and trivially on the rest, while the second factor acts trivially on $X_1$ and $X_2$ but non-trivially on the rest.  Since $G$ acts primitively on each type, $\Gamma$ is $G$-primitive-basic.

The group $G$ acts unfaithfully on each of the $X_i$.  However, the incidence graph of $\Gamma$ is not complete multipartite, as $\Gamma = \Gamma_1 \oplus \Gamma_2$ where $\Gamma_1$ and $\Gamma_2$ are, respectively, the $\{1,2\}$-truncation and $\{3,\ldots,d-1\}$-truncation of $\Gamma$, and each of $\Gamma_1$ and $\Gamma_2$ is indecomposable, with each being a truncation of $\PG(d-1,q)$.
}
\end{example}

\section{Normal-degenerate and normal-basic pregeometries} \label{actions}

\subsection{Normal degeneracy}\label{sec:ndegen}
Let $(\Gamma,G)\in\mathcal{G}$. 
In the case of normal quotients, we do not have the freedom to choose the induced partitions of the $X_i$ independently of one another, as they are all determined by the orbits of a given normal subgroup $N \unlhd G$.  Thus it is less reasonable (and less useful) to use the notion of primitive-degeneracy in this context.  Instead we require for non-degeneracy only that the quotient pregeometry has effective rank at least two (that is, $|X_i| \geq 2$ for at least two types $i$).

A pregeometry $\Gamma$ is called {\em normal-degenerate} if at most one of the $X_i$ contains more than one element.  Corresponding to this notion, a pregeometry $\Gamma$ with $G \leq \Aut\Gamma$ is called {\em $G$-normal-basic} if $\Gamma$ is not normal-degenerate, and every proper quotient is normal-degenerate.  The latter condition is equivalent to requiring that every non-trivial normal subgroup of $G$ is transitive on all but at most one of the $X_i$.

\subsection{Faithful, unfaithful and quasiprimitive orbits}
\label{sec:fqptypes}

For a permutation group $G$ on a set $X$ we say that a $G$-orbit is {\em $G$-faithful} if $G$ acts faithfully on it, and otherwise it is called {\em $G$-unfaithful}.
Let $\Gamma=(X,*,t)$ be a pregeometry with  $G \leq \Aut\Gamma$.  A type $i \in I$ is called {\em $G$-unfaithful} or {\em $G$-faithful} according to whether $G_{(X_i)} \neq 1$ or $G_{(X_i)} = 1$ respectively, and $i$ is called {\em $G$-quasiprimitive} if $G^{X_i}$ is quasiprimitive.  If every $i \in I$ is $G$-quasiprimitive and $G$-faithful then $\Gamma$ is called {\em fully $G$-quasiprimitive}.

\begin{lemma}
\label{lem:faithful}
Suppose that $(\Gamma,G)\in\mathcal{G}$ has rank at least two and for all nontrivial normal subgroups $N$ of $G$, $N$ is transitive on all but at most one of the $X_i$. Then either  $\Gamma$ is complete multipartite or $G$ is faithful on at least two of the $X_i$.
\end{lemma}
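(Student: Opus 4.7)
The plan is to establish the contrapositive by contradiction: assume $\Gamma$ is not complete multipartite and that $G$ is faithful on at most one of the $X_i$, and derive that $\Gamma$ must in fact be complete multipartite. The strategy is to show that every pair of distinct type classes $X_i, X_j$ is \emph{completely joined}, meaning each element of $X_i$ is incident with each element of $X_j$; this suffices because then the incidence graph of $\Gamma$ is complete multipartite, contradicting our standing assumption.

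Fix distinct types $i, j \in I$. First handle the degenerate case where $|X_i| = 1$ (or symmetrically $|X_j| = 1$): connectivity of the rank 2 truncation $\Gamma_{\{i,j\}}$ forces the unique element of $X_i$ to be incident with every element of $X_j$, so $X_i$ and $X_j$ are completely joined. Now suppose $|X_i| \geq 2$ and $|X_j| \geq 2$. Since $G$ is faithful on at most one of the $X_\ell$, at least one of $i, j$ is $G$-unfaithful; without loss of generality say $i$ is, so $T := G_{(X_i)}$ is a nontrivial normal subgroup of $G$. Because $T$ fixes $X_i$ pointwise and $|X_i| \geq 2$, $T$ is intransitive on $X_i$. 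By the hypothesis on normal subgroups of $G$, $T$ must be transitive on every other $X_\ell$, in particular on $X_j$. Connectivity of $\Gamma_{\{i,j\}}$ provides an incidence $x_i \ast x_j$ with $x_i \in X_i$, $x_j \in X_j$, and now Lemma~\ref{lem:faithpart} (applied with $N = T$) yields that every element of $X_i$ is incident with every element of $X_j$.

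Combining the two cases, every pair of distinct type classes is completely joined, so $\Gamma$ is complete multipartite, contradicting our assumption. This contradiction forces at least two of the $X_i$ to be $G$-faithful, which completes the proof. The argument has no real obstacle; the mild subtlety is simply noticing that when $|X_i|\geq 2$ the kernel $G_{(X_i)}$ is necessarily intransitive on $X_i$ so the ``exceptional orbit'' allowed by the hypothesis is forced to be $X_i$ itself, which is exactly what sets up the hypotheses of Lemma~\ref{lem:faithpart}.
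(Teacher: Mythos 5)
Your proof is correct and follows essentially the same route as the paper: pick an unfaithful type, use its kernel $G_{(X_i)}$ as the nontrivial normal subgroup forced (by the ``all but at most one'' hypothesis) to be transitive on every other $X_j$, and invoke Lemma~\ref{lem:faithpart} together with connectivity of the rank~2 truncations to get complete incidence between each pair of types. Your explicit treatment of the case $|X_i|=1$ (where the kernel need not be intransitive on $X_i$, so the exceptional orbit is not automatically pinned down) is a small point the paper's proof passes over silently, and it is handled correctly.
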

\begin{proof}
We have $X=X_1\cup\ldots\cup X_k$. Let $J$ be the subset of $I$ consisting of all $j$ such that $G$ is unfaithful on $X_j$. If $|I\setminus J|\geq 2$ then the second conclusion follows. Hence assume that $|I\setminus J|\leq 1$ and let $j\in J$. Then $G_{(X_j)} \neq 1$.  Let $i \in I \backslash \{j\}$.  By assumption, $G_{(X_j)}$ is transitive on $X_i$.  By Lemma \ref{lem:faithpart}, every element of type $j$ is incident with every element of type $i$. Since this holds for all $j\in J$ and all $i \neq j$, it follows that the incidence graph of $\Gamma$ is complete multipartite.
\end{proof}

Define $I_{\rm unf}$ to be the set of all $G$-unfaithful types; $I_{\rm qp}$ the set of all types that are both $G$-faithful and $G$-quasiprimitive; and $I_{\rm nonqp}$ the set $I \backslash (I_{\rm unf} \cup I_{\rm qp})$. In particular, we have a partition of $I$ into the three disjoint parts $I_{\rm qp}$, $I_{\rm unf}$ and $I_{\rm nonqp}$.

\begin{lemma}
\label{lem:boundingunfqp} 
Let $(\Gamma,G)\in\mathcal{G}$ such that for all nontrivial normal subgroups $N$ of $G$, $N$ is transitive on all but at most one of the $X_i$. Then $$|I_{\rm unf}\cup I_{\rm nonqp}|\leq \#\text{ minimal normal subgroups of }G.$$
Moreover, if $|I_{\rm qp}|\geq 1$ then $|I_{\rm unf} \cup I_{\rm nonqp}|\leq 2$ with equality implying $|I_{\rm unf}|=2$.
\end{lemma}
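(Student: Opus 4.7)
The plan is to prove the first bound by exhibiting, for each $i\in I_{\rm unf}\cup I_{\rm nonqp}$, a minimal normal subgroup $M_i\trianglelefteq G$ which is intransitive on $X_i$, and then noting that the hypothesis of the lemma forces the assignment $i\mapsto M_i$ to be injective. I would construct $M_i$ case by case. If $i\in I_{\rm unf}$ then $G_{(X_i)}$ is a nontrivial normal subgroup of $G$, so any minimal normal subgroup $M_i$ of $G$ contained in $G_{(X_i)}$ fixes $X_i$ pointwise and is intransitive on $X_i$. If $i\in I_{\rm nonqp}$ then $G=G^{X_i}$ is not quasiprimitive, so there is a nontrivial $N\trianglelefteq G$ intransitive on $X_i$; any minimal normal subgroup $M_i$ of $G$ contained in $N$ has orbits on $X_i$ refining those of $N$, so $M_i$ is also intransitive on $X_i$. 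By the lemma's hypothesis each $M_i$ is intransitive on at most one $X_j$, and that $j$ is $i$ by construction, so the map $i\mapsto M_i$ is injective.

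For the second part I would first invoke the fact, provable from Lemma \ref{centlem}, that a quasiprimitive permutation group has at most two minimal normal subgroups: three distinct minimal normals $M_1,M_2,M_3$ would pairwise intersect trivially by minimality, hence pairwise commute, and the Centraliser Lemma with $T=M_1$ transitive would force $M_2=C_G(M_1)=M_3$, a contradiction. Fixing $i_0\in I_{\rm qp}$, faithfulness of $G$ on $X_{i_0}$ gives that minimal normal subgroups of $G$ coincide with those of $G^{X_{i_0}}$, so $G$ has at most two of them, and with the first part this yields $|I_{\rm unf}\cup I_{\rm nonqp}|\leqslant 2$. Suppose now that equality holds, with $M_1,M_2$ the two minimal normal subgroups of $G$ and $j_1,j_2$ the associated types. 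Applying the Centraliser Lemma on $X_{i_0}$ shows that $M_1,M_2$ are both regular on $X_{i_0}$, so $|M_1|=|M_2|=|X_{i_0}|$. Assume for contradiction that $j_1\in I_{\rm nonqp}$, so $G$ (and hence $M_1$) is faithful on $X_{j_1}$. Applying Lemma \ref{centlem} again with $T=M_2$ transitive on $X_{j_1}$ shows that $M_1$ is semiregular on $X_{j_1}$; combined with faithfulness, all $M_1$-orbits on $X_{j_1}$ have size $|M_1|=|X_{i_0}|$, and intransitivity of $M_1$ yields $|X_{j_1}|\geqslant 2|X_{i_0}|$. This contradicts $|X_{j_1}|\leqslant|M_2|=|X_{i_0}|$, which follows from the transitivity of $M_2$ on $X_{j_1}$. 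Hence $j_1\in I_{\rm unf}$ and symmetrically $j_2\in I_{\rm unf}$, giving $|I_{\rm unf}|=2$.

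The main obstacle will be the equality case of the second part, where one must track the two minimal normal subgroups' actions on three different types $X_{i_0},X_{j_1},X_{j_2}$ simultaneously and convert regularity on $X_{i_0}$ together with semiregularity on $X_{j_1}$ into a numerical contradiction via orbit-size counting. Everything else is a direct application of Lemma \ref{centlem} together with the hypothesis that nontrivial normal subgroups of $G$ are intransitive on at most one type.
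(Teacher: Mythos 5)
Your proof is correct. The first inequality and the bound $|I_{\rm unf}\cup I_{\rm nonqp}|\leq 2$ are established exactly as in the paper (the paper simply cites Cameron's book for the fact that a quasiprimitive group has at most two minimal normal subgroups, whereas you re-derive it from Lemma~\ref{centlem}; same content). Where you genuinely diverge is the equality case. The paper rules out $j_1\in I_{\rm nonqp}$ by a structural argument: it invokes the fact that the two minimal normal subgroups $N,M$ of a quasiprimitive group are isomorphic to $T^n$ with $T$ nonabelian simple, sets $H=G_\alpha\cap(NM)$ for $\alpha$ in the offending type, and uses the projection of $H$ onto $M$ together with the transitivity of $G_\alpha$ on the simple direct factors of $M$ to force $M\leq G_\alpha$, hence $M\leq G_{(X_{j})}$. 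You instead argue numerically: regularity of $M_1,M_2$ on the quasiprimitive type $X_{i_0}$ (via Lemma~\ref{centlem}) pins down $|M_1|=|M_2|=|X_{i_0}|$; then if $j_1$ were faithful, semiregularity plus faithfulness plus intransitivity of $M_1$ on $X_{j_1}$ gives $|X_{j_1}|\geq 2|X_{i_0}|$, while transitivity of $M_2$ on $X_{j_1}$ gives $|X_{j_1}|\leq|X_{i_0}|$ --- a contradiction. This is essentially the counting device the paper itself uses in Lemma~\ref{lem:qp}, redeployed with the extra leverage that $|I_{\rm qp}|\geq 1$ fixes the orders of both minimal normal subgroups; it is shorter and avoids the O'Nan--Scott-type machinery entirely. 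What the paper's longer route buys is the slightly stronger structural conclusion that each of the two minimal normal subgroups lies in the kernel of the action on its associated type, but that extra information is not needed for the statement of the lemma. (Both your argument and the paper's tacitly assume each $|X_i|\geq 2$ when asserting that a subgroup acting trivially on $X_i$ is intransitive there; since this assumption is shared with the paper's own proof, it is not a gap in your proposal.)
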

\begin{proof}
Let $j\in I_{\rm unf}\cup I_{\rm nonqp}$ and let $m$ be the number of minimal normal subgroups of $G$. Then there exists a minimal normal subgroup $N_j$ of $G$ such that $N_j^{X_j}$ is intransitive. By assumption, for all $i\neq j$,
$N_j^{X_i}$ is transitive. Thus for distinct $j,j'\in I_{\rm unf}\cup I_{\rm nonqp}$. $N_j\neq N_{j'}$ and hence $|I_{\rm unf}\cup I_{\rm nonqp}| \leq m$. Moreover, if $|I_{\rm qp}|\geq 1$ then by \cite[Theorem 4.4]{CameronBook}, $G$ has at most two minimal normal subgroups and so $|I_{\rm unf} \cup I_{\rm nonqp}|\leq 2$.

Suppose now that $|I_{\rm qp}|\geq 1$ and $|I_{\rm unf} \cup I_{\rm nonqp}|=2$. Then $G$ has exactly two minimal normal subgroups $N,M$, say, and in particular, each type in $I_{\rm qp}$ is primitive. Moreover, $N \cong M \cong T^n$ for some finite non-abelian simple group $T$ and positive integer $n$ (see \cite[Theorem 4.3B]{DM}). Also $N \cap M = 1$, so $\langle N,M \rangle=NM$.  Let $i\in I_{\rm unf}\cup I_{\rm nonqp}$. Then without loss of generality we may assume that $N^{X_i}$ is intransitive. Suppose that $j\in  I_{\rm unf}\cup I_{\rm nonqp}$ with $j\neq i$.  By assumption $N^{X_j}$ is transitive and so, for $\alpha\in X_j$,  $G=G_{\alpha}N$.  Note that $N^{X_j}$ being transitive implies that $|X_j|$ divides $|N|$ and that $(NM)^{X_j}$ is transitive.  Let $H=G_{\alpha}\cap (NM)$ and let $\pi_2$ be the projection map from $NM$ to $M$. Then $H\norml G_{\alpha}$ and so $HN$ is normalised by $G_{\alpha}N=G$. Now $HN\leqslant NM$ and $M\cong (NM)/N$ is a minimal normal subgroup of $G/N$. Hence $HN/N=MN/N$ and so $HN=MN$. Thus $M\cong H/(H\cap N)$ and so $\pi_2(H)=M$. 
Now $H\cap M=G_{\alpha}\cap (NM)\cap M=G_{\alpha}\cap M$, which is normalised by $G_{\alpha}$. Since $G=G_{\alpha}N$ and $N$ centralises $M$, $G_{\alpha}$ acts transitively on the set of simple direct factors of $M$. Moreover, as $H\cap M\norml H$ we have $\pi_2(H\cap M)\norml \pi_2(H)=M$. Thus $H\cap M=M$ or $1$. Since $(NM)^{X_j}$ is transitive, we have $|X_j| = |NM:H|$.  If $H \cap M = 1$ then $M_\alpha=1$ and  $|NM:H|=|NM:HM||HM:H|=|NM:HM||M|$ and so $|M|$ divides $|NM:H| = |X_j|$. As noted above, $|X_j|$ divides $|N| = |M|$, so $|M:M_\alpha|=|M| = |X_j|$ and $M^{X_j}$ is transitive. Thus each minimal normal subgroup of $G$ is transitive on $X_j$ and hence $G$ is quasiprimitive on $X_j$, which is a contradiction. Thus $H\cap M=M$, and in particular $M$, a normal subgroup of $G$ is contained in $G_{\alpha}$. Hence $G$ acts unfaithfully on $X_j$ and so $j\in I_{\rm unf}$. Moreover, applying the same argument with $i$ and $j$ interchanged we find that $N$ acts trivially on $X_i$, and $i\in I_{\rm unf}$. Thus $|I_{\rm unf} \cup I_{\rm nonqp}| = 2$, which implies $I_{\rm nonqp}=\varnothing$. 
\end{proof}

\subsection{Group actions on normal-basic pregeometries}
Let $\Gamma = (X,\ast,t)$ be a pregeometry and $G \leq \Aut\Gamma$. Recall that if $G$ has a faithful quasiprimitive action on each $X_i$ then we say that $\Gamma$ is {\em fully $G$-quasiprimitive}.  Let ${\mathcal Q}$ denote the set of all pairs $(\Gamma,G)$ where $\Gamma$ is a pregeometry and $G \leq \Aut\Gamma$ such that $\Gamma$ is fully $G$-quasiprimitive.  Recall also that we use the notation $\Gamma(k,m)$ to denote the rank $k$ complete multipartite pregeometry in which $|X_i| = m$ for all $i$.

Our next Proposition lists all the possible ways a group $G$ can act on a normal-basic pregeometry.

\begin{proposition} \label{basicpregeoms}
Let $(\Gamma,G)\in\mathcal{G}$ such that $\Gamma$ is a $G$-normal basic pregeometry of rank $k$. Then exactly one of the following holds.
\begin{itemize}
\item[(i)] $(\Gamma,G) \in {\mathcal Q}$.
\item[(ii)] 
\begin{itemize}
\item[(a)] $k \geq 3$ and $\Gamma = \Gamma_0 \oplus \Gamma(1,m)$ where $(\Gamma_0,G^{\Gamma_0}) \in {\mathcal Q}$, $G\cong G^{\Gamma_0}$, and the single type of $\Gamma(1,m)$ is $G$-unfaithful.
\item[(b)] $k \geq 4$ and $\Gamma = \Gamma_0 \oplus \Gamma(1,m) \oplus \Gamma(1,m')$ where $(\Gamma_0,G^{\Gamma_0}) \in {\mathcal Q}$, $G\cong G^{\Gamma_0}$, and $G$ is unfaithful on the point set of each of $\Gamma(1,m)$ and $\Gamma(1,m')$.  In this case $G$ has $2$ minimal normal subgroups.
\end{itemize}
\item[(iii)] $\Gamma$ is complete multipartite and $G$ is faithful on at most one of the $X_i$.
\item[(iv)] $G$ is faithful but not quasiprimitive on one of the $X_i$, and faithful and quasiprimitive on each of the others.
\end{itemize}
\end{proposition}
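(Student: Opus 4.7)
The plan is to classify $(\Gamma,G)$ according to the sizes of the type subsets $I_{\rm unf}$, $I_{\rm nonqp}$ and $I_{\rm qp}$ from Subsection~\ref{sec:fqptypes}. The normal-basic assumption is exactly the standing hypothesis of Lemmas~\ref{lem:qp}, \ref{lem:faithful} and \ref{lem:boundingunfqp}, which I would apply in turn. Lemma~\ref{lem:qp} immediately gives $|I_{\rm nonqp}|\leq 1$: at most one $G$-faithful type can fail to be $G$-quasiprimitive.

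Next I would split on the number $|I_{\rm qp}\cup I_{\rm nonqp}|$ of $G$-faithful types. If this number is at most one, Lemma~\ref{lem:faithful} forces $\Gamma$ to be complete multipartite, which is conclusion (iii). Otherwise $|I_{\rm qp}\cup I_{\rm nonqp}|\geq 2$, and combined with $|I_{\rm nonqp}|\leq 1$ this yields $|I_{\rm qp}|\geq 1$, so Lemma~\ref{lem:boundingunfqp} applies, giving $|I_{\rm unf}\cup I_{\rm nonqp}|\leq 2$, with equality forcing $|I_{\rm unf}|=2$ and $I_{\rm nonqp}=\emptyset$. This cuts the list of admissible pairs $(|I_{\rm unf}|,|I_{\rm nonqp}|)$ down to $(0,0)$, $(0,1)$, $(1,0)$ and $(2,0)$, which I would match to conclusions (i), (iv), (ii)(a) and (ii)(b) respectively. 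Mutual exclusivity falls out automatically: (iii) is separated from the others by the faithful-type count, while the remaining four cases are distinguished by $(|I_{\rm unf}|,|I_{\rm nonqp}|)$.

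What remains is to justify the structural decomposition in cases (ii)(a) and (ii)(b). For (ii)(a), let $\{i\}=I_{\rm unf}$ and set $N=G_{(X_i)}\norml G$. Then $N$ is nontrivial and acts trivially on $X_i$; by the normal-basic condition, $X_i$ must therefore be the unique (if any) orbit on which $N$ is intransitive, so $N$ is transitive on every $X_\ell$ with $\ell\neq i$. Connectedness of each $\{i,\ell\}$-truncation together with Lemma~\ref{lem:faithpart} then gives complete incidence between $X_i$ and each $X_\ell$, so $\Gamma=\Gamma_0\oplus\Gamma(1,|X_i|)$, where $\Gamma_0$ is the $(I\setminus\{i\})$-truncation. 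Every type of $\Gamma_0$ lies in $I_{\rm qp}$, so $(\Gamma_0,G^{\Gamma_0})\in\mathcal{Q}$; faithfulness of $G$ on any single type of $I_{\rm qp}$ gives $G\cong G^{\Gamma_0}$; and $k\geq 3$ follows because $|I_{\rm qp}|=k-1\geq 2$ in the non-(iii) branch. Case (ii)(b) proceeds identically, applied independently to the two normal subgroups $G_{(X_i)}$ and $G_{(X_j)}$ for $\{i,j\}=I_{\rm unf}$; the assertion that $G$ has exactly two minimal normal subgroups has already been extracted inside the proof of Lemma~\ref{lem:boundingunfqp}, and $k\geq 4$ comes from $|I_{\rm qp}|=k-2\geq 2$.

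The main bookkeeping hazard I anticipate is ensuring that small-rank or small-$|I_{\rm qp}|$ boundary configurations — in particular those which syntactically resemble (ii)(a) or (ii)(b) but have only one remaining faithful type — route cleanly into (iii) rather than getting double-counted. Taking the Lemma~\ref{lem:faithful} branch first and establishing $|I_{\rm qp}|\geq 1$ before invoking Lemma~\ref{lem:boundingunfqp} handles this automatically, so no separate small-$k$ case analysis should be needed.
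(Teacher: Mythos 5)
Your proposal is correct and follows essentially the same route as the paper: it partitions the types into $I_{\rm unf}$, $I_{\rm nonqp}$, $I_{\rm qp}$, invokes Lemmas~\ref{lem:qp}, \ref{lem:faithful} and \ref{lem:boundingunfqp} to constrain $(|I_{\rm unf}|,|I_{\rm nonqp}|)$, and uses Lemma~\ref{lem:faithpart} to obtain the direct-sum decompositions in cases (ii)(a),(b). The only difference is organisational — you branch first on the number of faithful types rather than on $s=|I_{\rm unf}\cup I_{\rm nonqp}|$, which absorbs the small-$k$ boundary checks (e.g.\ $k=3$ with $|I_{\rm unf}|=2$, $|I_{\rm qp}|=1$) that the paper treats explicitly.
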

\begin{proof} Let $s = |I_{\rm unf} \cup I_{\rm nonqp}|$ (as defined before Lemma \ref{lem:boundingunfqp}).  If $s = 0$ then $\Gamma$ is fully quasiprimitive as in case (i) of the statement.  Suppose next that $s = 1$.  If $|I_{\rm nonqp}| = s=1$ then we are in case (iv).  
If $|I_{\rm unf}| = s=1$ and $k = 2$ then we are in case (iii) by Lemma \ref{lem:faithpart}. Assume next that $|I_{\rm unf}| = s=1$ and $k \geq 3$. Let $I_{\rm unf}=\{j\}$ and let $\Gamma_0$ be the $(I \backslash \{j\})$-truncation and $\Gamma'$ the $\{j\}$-truncation of $\Gamma$.  Then $\Gamma' = \Gamma(1,m)$ where $m = |X_j|$.  Since $\Gamma$ is $G$-normal-basic, so is $\Gamma_0$, and as $G$ is faithful and quasiprimitive on all $X_i$ in $\Gamma$ except $X_j$, $\Gamma_0$ is fully $G$-quasiprimitive.
That $\Gamma = \Gamma_0 \oplus \Gamma(1,m)$ is an immediate consequence of Lemma \ref{lem:faithpart}.  Hence we are in case (ii)(a).

Suppose now that $s = 2$.  Then by Lemma \ref{lem:boundingunfqp}, $G$ has (at least) $2$ minimal normal subgroups.  Moreover, by Lemma \ref{lem:qp}, $|I_{\rm nonqp}| \leq 1$.  Thus $|I_{\rm unf}| \geq 1$.  If $k = 2$ then we are in case (iii) by Lemma \ref{lem:faithpart}.  On the other hand if $k \geq 3$ then $|I_{\rm qp}|=k-s\geq 1$ and so $G$ has at most two minimal normal subgroups (see \cite[Theorem 1]{PraegerQP}).  Since $s=2$, Lemma \ref{lem:boundingunfqp} implies that there are exactly two minimal normal subgroups.  Also, Lemma \ref{lem:boundingunfqp} implies that $|I_{\rm unf}| = 2$.  By Lemma \ref{lem:faithful}, either $k =3$ and we are in case (iii), or $k \geq 4$.  Assume that $k \geq 4$, let $I_{\rm unf}=\{j,\ell\}$, let $\Gamma_0$ be the $(I \backslash \{j,\ell\})$-truncation and $\Gamma'$ the $\{j,\ell\}$-truncation of $\Gamma$. Let $m = |X_j|$ and $m' = |X_\ell|$. By assumption $G_{(X_j)}$ is transitive on $X_{\ell}$, so by Lemma \ref{lem:faithpart}, every point in $X_j$ is incident with every point in $X_\ell$, and hence $\Gamma' = \Gamma(1,m) \oplus \Gamma(1,m')$.  Since $\Gamma$ is $G$-normal-basic, so is $\Gamma_0$, and as $G$ is faithful and quasiprimitive on each $X_i$ with $i\in I\backslash\{j,\ell\}$, $\Gamma_0$ is fully $G$-quasiprimitive.  That $\Gamma = \Gamma_0 \oplus \Gamma'$ also follows from Lemma \ref{lem:faithpart} (applied with $X_1 = X_j$ and then again with $X_1 = X_\ell$ in its statement).  Hence we are in case (ii)(b).
 
Suppose finally that $s > 2$.  Then by Lemma \ref{lem:boundingunfqp}, $|I_{\rm qp}|=\varnothing$ and so $s=k$. Moreover, Lemma \ref{lem:qp} implies that $|I_{\rm unf}|\geq k-1$ and so by Lemma \ref{lem:faithful}, $\Gamma$ is complete multipartite.  Hence we are in case (iii) and the proof is complete.
\end{proof}

\section{Case (iii) of Proposition \ref{basicpregeoms}} \label{compmultsect}
In Proposition \ref{compmultprop} we give further details (shown in Table \ref{compmulttable}) about $G$-normal-basic pregeometries in case (iii) of Proposition \ref{basicpregeoms}. We work with the following hypothesis concerning $\Gamma$, $G$ and $k$.

\begin{hypothesis} \label{compmulthypo}
Suppose that $\Gamma = (X, \ast, t)$ is a $G$-normal-basic pregeometry of rank $k \geq 2$, such that $\Gamma$ is in case (iii) of Proposition {\em \ref{basicpregeoms}}, that is, $\Gamma$ is complete multipartite, and $G$ is faithful on at most one of the $X_i$.  For each type $i$, let $T_i = G_{(X_i)}$.
\end{hypothesis}

First we show via construction that this case arises for rank 2 with $|I_{unf}|=2$ and any $|X_1|, |X_2|$.

\begin{constr} \label{rank2bipart} {\em
Let $X_1$ and $X_2$ be sets and for each $i$ let $T_i$ be a group acting faithfully and quasiprimitively on $X_i$.  Let $\Gamma$ be the pregeometry with point set $X_1 \dotcup X_2$ such that the incidence graph is complete bipartite.  Let $G = T_1 \times T_2$ and let $G$ act on $\Gamma$ by
$$x_i^{(t_1,t_2)} = x_i^{t_i}$$
if $x_i \in X_i$. Clearly $G\leqslant\Aut\Gamma$ and $\Gamma$ is $G$-normal basic. Furthermore, $G$ acts unfaithfully on each $X_i$.
}
\end{constr}

Our main result for $k\geq 3$ is Proposition \ref{compmultprop}.  We use the notation $I_{\rm unf}$ and $I_{\rm qp}$ as defined in Section \ref{sec:fqptypes}.

\begin{proposition} \label{compmultprop}
Suppose that $\Gamma$ and $G$ satisfy Hypothesis {\em \ref{compmulthypo}} with $k\geq 3$.  Then each $T_i$ is a minimal normal subgroup, and $k$, $|I_{\rm unf}|$, $|I_{\rm qp}|$ and the $T_i$ are as in one of the lines of Table {\em \ref{compmulttable}}.  Moreover, if  $|I_{\rm unf}| \geq 3$ then $\Gamma = \Gamma(k,m)$ for some $m$.
\end{proposition}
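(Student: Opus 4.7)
My plan is to exploit the normal-basic hypothesis together with the Centraliser Lemma to pin down the kernels $T_i=G_{(X_i)}$ for $i\in I_{\rm unf}$, and then to translate the information into the classification table. The key observation is that for $i\in I_{\rm unf}$ with $|X_i|\ge 2$, the nontrivial normal subgroup $T_i$ fixes $X_i$ pointwise, so by normal-basicness it must be transitive on every other $X_j$. For distinct $i,j\in I_{\rm unf}$ with $|X_i|,|X_j|\ge 2$, the intersection $T_i\cap T_j$ is a normal subgroup of $G$ fixing both $X_i$ and $X_j$ pointwise; normal-basicness forces it to be trivial, so $[T_i,T_j]\le T_i\cap T_j=1$ and $T_i,T_j$ centralise each other.

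With this commutation in hand, the Centraliser Lemma applied to the transitive action of $T_j$ on each $X_\ell$ for $\ell\ne j$ gives that $T_i$ is semiregular on $X_\ell$; combined with transitivity of $T_i$ on $X_\ell$ for $\ell\ne i$, we conclude $T_i$ acts regularly on $X_\ell$ whenever $\ell\ne i,j$. Reading off sizes, $|T_i|=|X_\ell|$ for any such $\ell$ with $|X_\ell|\ge 2$. To prove $T_i$ is minimal normal in $G$, take any minimal normal $M\triangleleft G$ with $M\le T_i$: minimality forces $M\cap T_j=1$ (else $M\le T_i\cap T_j=1$), so the same Centraliser-Lemma argument applied to $M$ gives $|M|=|X_\ell|=|T_i|$, whence $M=T_i$.

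For the moreover clause I assume $|I_{\rm unf}|\ge 3$ and first rule out $|X_m|=1$: such an $m$ would lie in $I_{\rm unf}$ with $T_m=G$, and if the minimality of $T_m$ were to force $G$ simple, then every $T_i$ for $i\in I_{\rm unf}$ would equal $G$, giving $|X_i|=1$ for all $i\in I_{\rm unf}$, which together with the case-(iii) bound $|I_{\rm unf}|\ge k-1$ leaves at most one part of size $\ge 2$ and contradicts non-degeneracy. Hence all $|X_\ell|\ge 2$, and varying the pair $(i,j)\subset I_{\rm unf}$ in the regularity conclusion shows that all part sizes agree, giving $\Gamma=\Gamma(k,m)$. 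The table itself is then obtained by combining the structural information with Lemmas~\ref{lem:boundingunfqp} and~\ref{lem:qp} and the case-(iii) constraint $|I_{\rm unf}|\ge k-1$ to enumerate the admissible quadruples $(k,|I_{\rm unf}|,|I_{\rm nonqp}|,|I_{\rm qp}|)$ and, for each, recording the common order of the $T_i$ and the part sizes.

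The main obstacle I anticipate is the verification of the precise table entries in the large-rank row $|I_{\rm unf}|=k\ge 4$: there the $T_i$ form a pairwise commuting family of minimal normal subgroups of equal order acting regularly on each of the equal-size parts, and showing that this forces the $T_i$ to be (elementary) abelian with a prime-power common order (and ultimately the bound $k\le m+1$ of Theorem~\ref{normalbasicthm}(iii), via affine-plane-type combinatorics) requires additional work. A secondary point of care is the treatment of part sizes equal to one in the small-$|I_{\rm unf}|$ rows, where one must avoid circular reasoning in the minimality step by first establishing that such singleton parts cannot occur under the non-degeneracy and case-(iii) assumptions.
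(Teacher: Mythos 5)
Your preparatory steps are sound and match the paper's own Lemma~\ref{regularkernels} and Corollary~\ref{samesize}: for distinct unfaithful types $i,j$ you get $T_i\cap T_j=1$, hence $[T_i,T_j]=1$, hence (via Lemma~\ref{centlem} plus normal-basicness) $T_i$ and $T_j$ faithful and regular on each $X_\ell$ with $\ell\neq i,j$, and from this the minimality of each $T_i$ and, when $|I_{\rm unf}|\geq 3$, the conclusion $\Gamma=\Gamma(k,m)$. But the proposition's actual content is Table~\ref{compmulttable}, and you have not proved any of its three lines; you defer exactly the parts that carry the weight. Concretely: (1) for $k\geq 4$ you must show each $T_i$ is abelian --- the paper's argument (Lemma~\ref{compmultabelian}) takes three unfaithful types $i,j,\ell$ and a fourth type $r$, notes that $T_i^{X_r},T_j^{X_r},T_\ell^{X_r}$ are regular and pairwise centralising, and uses Lemma~\ref{centlem} to force $T_i^{X_r}=C_{\Sym(X_r)}(T_\ell^{X_r})=T_j^{X_r}$, so that $T_i^{X_r}$ equals a group it centralises and is therefore abelian; this is not ``affine-plane-type combinatorics'' but a specific three-subgroup trick you have not supplied. (2) In the abelian case you must show $T_i\cong\mathbb{Z}_p^d$, that each further $T_\ell$ is a full diagonal subgroup of $T_i\times T_j$, that $\langle T_r\mid r\in I\rangle\cong\mathbb{Z}_p^{2d}$, and the bound $k\leq p^d+1$; the paper's Lemma~\ref{propcasec} does this by identifying $T_i\times T_j$ with $V'\oplus V'$, writing each $T_\ell$ as $\{(u,uA)\}$, and applying Schur's Lemma to bound the number of admissible matrices $A$. (3) The two $k=3$ lines (Lemmas~\ref{propcasea} and~\ref{propcaseb}) --- quasiprimitivity of $G^{X_3}$ with the divisibility conditions when $|I_{\rm unf}|=2$, and $\langle T_1,T_2,T_3\rangle=T_1\times T_2\times T_3$ in the nonabelian case --- are dismissed in one sentence (``recording the common order of the $T_i$''), but each requires its own argument (e.g.\ showing $\hat T_3=T_3\cap(T_1\times T_2)=1$ via minimal normal subgroups of a product of nonabelian characteristically simple groups).

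A secondary problem is your treatment of singleton parts. The argument you give is explicitly conditional (``if the minimality of $T_m$ were to force $G$ simple\dots'') and comes after you have already used normal-basicness to deduce $T_i\cap T_j=1$ and transitivity of $T_i$ on $X_\ell$ --- deductions which themselves require the relevant parts to have size at least two (a normal subgroup acting trivially on a singleton part is transitive there, so basicness gives you nothing). So the singleton question must be settled before, not after, the regularity analysis, and your proposed resolution does not do this. In short: the skeleton is the right one, but the classification that the table records --- which is what the proposition asserts --- remains unproved.
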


{\openup 5pt
\begin{table}[h]
\begin{tabular}{|ccccl|} 
\hline
& $k$ & $|I_{\rm unf}|$ & $|I_{\rm qp}|$ & Comments on $T_i$\\
\hline\hline
(i) & $3$ & $2$ & $1$ & nonabelian, $T_1\cong T_2$, $|X_3|=|T_1|$, $|X_1|$ and $|X_2|$ divide $|X_3|$\\
(ii) & $3$ & $3$ & $0$ & nonabelian, $\langle T_1, T_2, T_3 \rangle  = T_1 \times T_2 \times T_3$\\
(iii) & $3 \leq k \leq m+1$ & $k$ & $0$ & $T_i = \mathbb{Z}_p^d$, $m = p^d$, $p$ a prime, $\langle T_i \, | \, i \in I \rangle = \mathbb{Z}_p^{2d}$\\
\hline
\end{tabular} \caption{The possibilites in Proposition \ref{compmultprop}.}  \label{compmulttable}
\end{table}
}

We prove the above Proposition using Lemmas \ref{regularkernels} and \ref{propcasea}--\ref{propcaseb}, and Corollary \ref{samesize} below.

\begin{lemma} \label{regularkernels}
Suppose that $\Gamma$ and $G$ satisfy Hypothesis {\em \ref{compmulthypo}} with $k\geq 3$. For distinct $i,j\in I_{\rm unf}$ and $\ell\in I\backslash\{i,j\}$, the following hold. 
\begin{itemize}
\item[(a)] $T_i \cap T_j = 1$,
\item[(b)] $T_i$ and $T_j$ are faithful and regular on $X_\ell$, and
\item[(c)] $T_i \cong T_j$.
\end{itemize}
\end{lemma}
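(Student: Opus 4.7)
The plan is to establish (a) first, then to deduce (b) and (c) by combining (a) with the $G$-normal-basic hypothesis and the Centraliser Lemma (Lemma \ref{centlem}). The recurring device is: any nontrivial normal subgroup $N \norml G$ which acts trivially on two distinct $X_a, X_b$ with $|X_a|, |X_b| \geq 2$ must be intransitive on both, and so contradicts $G$-normal-basicity (which forces $N$ to be transitive on all $X_m$ except possibly one). Under Hypothesis \ref{compmulthypo} with $k \geq 3$ we have $|I_{\rm unf}| \geq k - 1 \geq 2$ from case (iii) of Proposition \ref{basicpregeoms}, and since $\Gamma$ is not normal-degenerate we may take $|X_i| \geq 2$ for the unfaithful types appearing in the lemma, so the device applies.

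For (a), the subgroup $T_i \cap T_j$ is normal in $G$ and acts trivially on both $X_i$ and $X_j$, so the device forces $T_i \cap T_j = 1$. For the transitivity half of (b), applying the $G$-normal-basic hypothesis to the nontrivial normal subgroup $T_i$, which is intransitive on $X_i$, forces $T_i$ to be transitive on every other $X_m$, in particular on $X_\ell$; symmetrically for $T_j$. For faithfulness on $X_\ell$, observe that the kernel of the $T_i$-action on $X_\ell$ is the normal subgroup $T_i \cap T_\ell$ of $G$; if $\ell \notin I_{\rm unf}$ then $T_\ell = 1$ and this is immediate, while if $\ell \in I_{\rm unf}$ the device applied to $T_i \cap T_\ell$ yields the same conclusion; symmetrically for $T_j$.

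Finally, (a) combined with the normality of $T_i$ and $T_j$ gives $[T_i, T_j] \leq T_i \cap T_j = 1$, so $T_j \leq C_G(T_i)$. Passing to the faithful induced actions on $X_\ell$, we have $T_j^{X_\ell} \leq C_{G^{X_\ell}}(T_i^{X_\ell})$ with $T_i^{X_\ell}$ transitive. Since $T_j^{X_\ell}$ is also transitive on $X_\ell$, Lemma \ref{centlem} delivers that both $T_i^{X_\ell}$ and $T_j^{X_\ell}$ are regular on $X_\ell$ and that $T_i^{X_\ell} \cong T_j^{X_\ell}$; transferring back via the established faithfulness on $X_\ell$ yields (b) and (c). The main obstacle is the small case analysis required to apply the recurring device cleanly---tracking whether $\ell \in I_{\rm unf}$ and which intersections could potentially collapse to the trivial group---after which the proof reduces to a direct invocation of Lemma \ref{centlem}.
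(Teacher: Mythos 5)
Your proof is correct and follows essentially the same route as the paper's: part (a) and the faithfulness of $T_i,T_j$ on $X_\ell$ come from the observation that $T_a\cap T_b$ is a normal subgroup acting trivially on two type-classes (hence trivial by normal-basicity), transitivity on $X_\ell$ comes from normal-basicity applied to $T_i$ and $T_j$ themselves, and regularity together with $T_i\cong T_j$ follows from Lemma~\ref{centlem} applied to the mutually centralising transitive images on $X_\ell$. The only cosmetic difference is that you split the faithfulness step according to whether $\ell\in I_{\rm unf}$, while the paper states the intersection claim for arbitrary pairs of types at the outset; your appeal to non-degeneracy to justify $|X_i|\geq 2$ is no more (and no less) explicit than the paper's own implicit assumption.
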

\begin{proof}
For any distinct $i',j'\in I$, the subgroups $T_{i'}$ and $T_{j'}$ are normal in $G$, so $T_{i'} \cap T_{j'} \lhd G$ and $T_{i'} \cap T_{j'}$ acts trivially on both $X_{i'}$ and $X_{j'}$.  Since $\Gamma$ is $G$-basic, this implies that $T_{i'} \cap T_{j'} =1$. In particular part (a) is proved, and $T_i,T_j$ are both faithful on $X_\ell$.  

Now, since $\Gamma$ is $G$-basic, $T_i^{X_\ell}$ and $T_j^{X_\ell}$ are both transitive.  Since $T_i \cap T_j$ is trivial and $T_i$ and $T_j$ normalise each other, we have $T_i \leq C_G(T_j)$.  Hence by Lemma \ref{centlem}, $T_i$ and $T_j$ are regular on $X_\ell$, and $T_i \cong T_j$, whence we get parts (b) and (c).
\end{proof}

\begin{corollary} \label{samesize}
Suppose that $\Gamma$ and $G$ satisfy Hypothesis {\em \ref{compmulthypo}}, and $|I_{\rm unf}|\geq 3$. Then  $\Gamma = \Gamma(k,m)$ for some $m$.
\end{corollary}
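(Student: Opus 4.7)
The plan is to leverage Lemma~\ref{regularkernels}(b), which for any distinct $i,j\in I_{\rm unf}$ and any $\ell\in I\setminus\{i,j\}$ asserts that $T_i$ acts regularly on $X_\ell$, and hence $|T_i|=|X_\ell|$. Given this, the statement reduces to a short combinatorial argument on index sets.

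First I would fix some $i\in I_{\rm unf}$ and show that $|X_\ell|$ is independent of $\ell\in I\setminus\{i\}$. The point is that, because $|I_{\rm unf}|\geq3$, for any such $\ell$ we can always find $j\in I_{\rm unf}\setminus\{i,\ell\}$: if $\ell\notin I_{\rm unf}$ then $I_{\rm unf}\setminus\{i\}$ has at least two elements, and if $\ell\in I_{\rm unf}$ then $I_{\rm unf}\setminus\{i,\ell\}$ still has at least one element. Applying Lemma~\ref{regularkernels}(b) to this triple $(i,j,\ell)$ gives $|X_\ell|=|T_i|$. Thus, setting $m:=|T_i|$, we have $|X_\ell|=m$ for every $\ell\in I\setminus\{i\}$.

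Next I would handle the remaining part $X_i$. Choose a second type $i'\in I_{\rm unf}\setminus\{i\}$, which exists since $|I_{\rm unf}|\geq 3$. Repeating the previous argument with $i'$ in place of $i$, we get $|X_\ell|=|T_{i'}|$ for every $\ell\in I\setminus\{i'\}$; in particular, $|X_i|=|T_{i'}|$. Since $k\geq|I_{\rm unf}|\geq 3$, we may also pick some $\ell_0\in I\setminus\{i,i'\}$, and then $|X_{\ell_0}|=m$ from the first step and $|X_{\ell_0}|=|T_{i'}|$ from the second, forcing $|T_{i'}|=m$, and hence $|X_i|=m$ as well. Combined with Hypothesis~\ref{compmulthypo} (which ensures $\Gamma$ is complete multipartite), this yields $\Gamma=\Gamma(k,m)$.

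There is no real obstacle here; the only thing to watch carefully is the bookkeeping on index sets to verify that $|I_{\rm unf}|\geq 3$ (and consequently $k\geq 3$) supplies enough room to pick the auxiliary index $j$ distinct from both $i$ and $\ell$ in every application of Lemma~\ref{regularkernels}(b), and to pick $\ell_0$ outside of $\{i,i'\}$ when comparing $m$ with $|T_{i'}|$.
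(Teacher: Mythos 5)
Your proposal is correct and follows essentially the same route as the paper: both arguments rest entirely on Lemma~\ref{regularkernels}(b) together with the observation that $|I_{\rm unf}|\geq 3$ always leaves room to choose an auxiliary unfaithful index $j$ distinct from the pair under consideration, so that $|T_i|=|X_\ell|$ for all relevant $\ell$ and the part sizes are all forced to coincide. The only difference is cosmetic bookkeeping in how the indices are organised.
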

\begin{proof}
Let $i,j\in I_{\rm unf}$ with $i\neq j$, let $\ell\in I\backslash\{i,j\}$, and let $i'\in I_{\rm unf}\backslash\{i,j\}$.  By Lemma \ref{regularkernels}, $T_i$ is regular and faithful on $X_\ell$. The same argument with $\{i,i'\}$, $j$ in place of $\{i,j\}$, $\ell$ gives that $T_i$ is regular and faithful on $X_j$.  So $|T_i| = |X_j| = |X_\ell|$.  Similarly $|X_i| = |X_\ell|$.  Writing $m := |X_\ell|$ we obtain the result.
\end{proof}

\begin{lemma} \label{propcasea}
Suppose that $\Gamma$ and $G$ satisfy Hypothesis {\em \ref{compmulthypo}} such that $k = 3$, and $|I_{\rm unf}| = 2$, say $I_{\rm unf}=\{1,2\}$.  Then $G$ is quasiprimitive on $X_3$ and $T_1, T_2$ are its minimal normal subgroups. Moreover, $\Gamma=\Gamma(1,m_1)\oplus\Gamma(1,m_2)\oplus\Gamma(1,m_3)$ where $m_3=|T_1|=|T_2|$ and $m_1,m_2$ divide $m_3$.
\end{lemma}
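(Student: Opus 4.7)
My plan is to proceed in three stages. First, I would extract the immediate structural facts from Lemma~\ref{regularkernels}. Second, and this is the crux, I would show that $T_1$ and $T_2$ are the \emph{only} minimal normal subgroups of $G$, which then yields quasiprimitivity on $X_3$. Third, I would read off the divisibility conditions and the direct sum decomposition.

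Applying Lemma~\ref{regularkernels} with $\{i,j\}=\{1,2\}$ and $\ell=3$ gives $T_1\cap T_2=1$, $T_1\cong T_2$, and that $T_1,T_2$ are both faithful and regular on $X_3$; in particular $|T_1|=|T_2|=|X_3|=m_3$. Since $I_{\rm unf}=\{1,2\}$, type $3$ is $G$-faithful, so $G$ acts faithfully on $X_3$. A quick check rules out any $|X_i|=1$: for $i\in\{1,2\}$, if $|X_i|=1$ then $T_i=G$, and $T_1\cap T_2=1$ would then force $T_{3-i}=1$, contradicting $3-i\in I_{\rm unf}$; and $|X_3|=|T_1|\geq 2$ since $T_1\neq 1$.

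Next, I would prove each $T_i$ is a minimal normal subgroup of $G$. If $1\neq N\norml G$ with $N\leq T_1$, then $N$ acts trivially, hence intransitively, on $X_1$; the $G$-normal-basic hypothesis then forces $N$ to be transitive on $X_3$, so $|X_3|\leq|N|\leq|T_1|=|X_3|$ and thus $N=T_1$. For uniqueness, suppose $M$ is a minimal normal subgroup of $G$ distinct from $T_1$ and $T_2$. By minimality, $M\cap T_1=M\cap T_2=1$, so $M\leq C_G(T_1)$. Since $T_1^{X_3}$ is regular on $X_3$, its centraliser in $\Sym(X_3)$ has order exactly $|X_3|$; and $T_2^{X_3}$ is a transitive subgroup of that centraliser of the same order $|X_3|$, so $C_{\Sym(X_3)}(T_1^{X_3})=T_2^{X_3}$. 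Hence $M^{X_3}\leq T_2^{X_3}$, and because $G$ acts faithfully on $X_3$ this lifts to $M\leq T_2$ in $G$, forcing $M=T_2$ by minimality of $T_2$, a contradiction. Thus $T_1,T_2$ are the only minimal normal subgroups; every nontrivial normal subgroup of $G$ contains one of them and is therefore transitive on $X_3$, so $G$ is quasiprimitive on $X_3$ with minimal normal subgroups exactly $T_1,T_2$.

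For the third stage, $T_2$ is normal in $G$ with $T_2\cap T_1=1$, so $T_2$ has trivial kernel on $X_1$ and hence acts faithfully there; since $T_2$ fixes $X_2$ pointwise and is nontrivial, the normal-basic hypothesis forces $T_2$ to be transitive on $X_1$, and orbit--stabiliser then gives $m_1=|X_1|$ divides $|T_2|=m_3$. Symmetrically $m_2\mid m_3$. The decomposition $\Gamma=\Gamma(1,m_1)\oplus\Gamma(1,m_2)\oplus\Gamma(1,m_3)$ is just a restatement of the complete multipartite structure assumed in Hypothesis~\ref{compmulthypo}. The main obstacle is the centraliser squeeze $C_{\Sym(X_3)}(T_1^{X_3})=T_2^{X_3}$ in the uniqueness step, which is what collapses any hypothetical third minimal normal subgroup into $T_2$.
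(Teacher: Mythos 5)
Your proof is correct and follows essentially the same route as the paper: both arguments rest on Lemma~\ref{regularkernels} plus the identification of the centraliser of one kernel with the other (the paper gets $C_G(T_2)=T_1$ directly from Lemma~\ref{centlem} and faithfulness on $X_3$, while you compute $C_{\Sym(X_3)}(T_1^{X_3})=T_2^{X_3}$ by the regular-centraliser count, which is the same fact). The only difference is organisational --- you first pin down the minimal normal subgroups and deduce quasiprimitivity, whereas the paper takes an arbitrary normal subgroup intransitive on $X_3$ and squeezes it into $T_1\cap T_2=1$; both are complete.
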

\begin{proof}
The fact that $\Gamma$ is $G$-basic implies that $T_i^{X_3}\cong T_i$ is transitive for $i = 1,2$.  By Lemma \ref{regularkernels}, $T_1\cap T_2=1$ so $T_1\leqslant C_G(T_2)$ and hence by Lemma \ref{centlem}, $T_1\cong T_2$ and both are faithful and regular on $X_3$. Hence $|T_1|=|T_2|=|X_3|=m_3$. Moreover, $T_1^{X_2}\cong T_1$, $T_2^{X_1}\cong T_2$ are both transitive, and so, for $i\in\{1,2\}$, $m_i=|X_i|$ divides $m_3=|T_{3-i}|$. Thus, since $\Gamma$ is complete multipartite by Hypothesis~\ref{compmulthypo}, $\Gamma$ is as stated. Also since $G\cong G^{X_3}$ and $T_1\cap T_2=1$, it follows by Lemma \ref{centlem} that $T_1=C_G(T_2)$.

Now assume that $N\unlhd G$ with $N$ intransitive on $X_3$.  Then for $i = 1,2$, $N \cap T_i \lhd G$ and $N \cap T_i$ is intransitive on both $X_i$ and $X_3$, so since $\Gamma$ is $G$-basic, $N \cap T_i$ is trivial. Hence $N\leqslant C_G(T_i)=T_{3-i}$, so $N\leqslant T_1\cap T_2=1$. Thus all nontrivial normal subgroups of $G$ are transitive on $X_3$, that is, $G^{X_3}$ is quasiprimitive. Since $T_1^{X_3}$ and $T_2^{X_3}$ are regular, it follows that $T_1, T_2$ are the minimal normal sugroups of $G$.
\end{proof}

For isomorphic groups $A, B$ a full diagonal subgroup of $A\times B$ is a subgroup $C\leq A\times B$ such that $C$ projects onto both $A$ and $B$, and for each $a\in A$ there is a unique $b\in B$ such that $(a,b)\in C$. Each full diagonal subgroup is of the form $C=\{(a,a^\varphi)\,|\,a\in A\}$ for some isomorphism $\varphi: A\rightarrow B$. 

\begin{lemma} \label{propcasec}
Suppose that $\Gamma$ and $G$ satisfy Hypothesis {\em \ref{compmulthypo}} with $k\geq 3$ and $|I_{\rm unf}|\geq 3$, and that $T_i = G_{(X_i)}$ is abelian for some type $i$.  Then for some prime $p$ and integer $d$, $\Gamma=\Gamma(k,p^d)$, where $k\leq p^d+1$,  each $T_i\cong \mathbb{Z}_p^d$, is a minimal normal subgroup of $G$, and $\langle T_i \, | \, i \in I \rangle = \mathbb{Z}_p^{2d}$.
\end{lemma}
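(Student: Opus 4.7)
The plan is to extract the structure of each $T_i$ via Lemma \ref{regularkernels}, upgrade each $T_i$ to a minimal normal subgroup of $G$ (hence elementary abelian under the given hypothesis), and then to analyse the joint subgroup $H = \langle T_i \mid i \in I\rangle$ as an $\mathbb{F}_p$-vector space in order to obtain the bound $k \leq p^d + 1$.

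First I would establish that, for every $i \in I_{\rm unf}$ and every $\ell \in I$ with $\ell \neq i$, the subgroup $T_i$ acts regularly on $X_\ell$: since $|I_{\rm unf}| \geq 3$, I may choose $j \in I_{\rm unf} \setminus \{i, \ell\}$ and apply Lemma \ref{regularkernels}(b). Combined with Lemma \ref{regularkernels}(c), all $T_j$ with $j \in I_{\rm unf}$ are pairwise isomorphic, so the abelianness hypothesis (applied to a non-trivial $T_i$, which must lie in $I_{\rm unf}$) propagates to all of them, and by Corollary \ref{samesize} each has order $m = |X_\ell|$. Next I would show each such $T_i$ is a minimal normal subgroup of $G$: if $1 \neq N \lhd G$ with $N \leq T_i$, then $N$ fixes $X_i$ pointwise, so the $G$-normal-basic condition forces $N$ to be transitive on every other $X_\ell$, and regularity of $T_i$ there gives $|N| \geq |T_i|$, hence $N = T_i$. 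Since $T_i$ is an abelian minimal normal subgroup of $G$, it is elementary abelian, so $T_i \cong \mathbb{Z}_p^d$ for some prime $p$ with $p^d = m$, and Corollary \ref{samesize} gives $\Gamma = \Gamma(k, p^d)$.

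Third, I would rule out the possibility $I_{\rm nonqp} \neq \varnothing$. Suppose $\ell \in I_{\rm nonqp}$, so $T_\ell = 1$. For distinct $i, j \in I_{\rm unf}$, $[T_i, T_j] \leq T_i \cap T_j = 1$ by Lemma \ref{regularkernels}(a), so $H' := \langle T_j \mid j \in I_{\rm unf}\rangle$ is elementary abelian of order at least $|T_i T_j| = p^{2d}$. On the other hand $H'$ acts faithfully (since $T_\ell = 1$) and transitively on $X_\ell$, and an abelian transitive group is regular, so $|H'| = p^d$, a contradiction. Hence $|I_{\rm unf}| = k$. Now setting $H = \langle T_i \mid i \in I\rangle$, the same commutator argument gives $H$ elementary abelian; for any $\ell$, $H^{X_\ell}$ is abelian and transitive, hence regular of order $p^d$, and its kernel is $H \cap T_\ell = T_\ell$ (since $T_\ell \leq H$), so $|H| = p^{2d}$ and $H \cong \mathbb{Z}_p^{2d}$. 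Viewing $H$ as $\mathbb{F}_p^{2d}$, the $T_i$ form $k$ pairwise-trivially-intersecting $d$-dimensional subspaces; counting non-identity elements gives $k(p^d - 1) \leq p^{2d} - 1 = (p^d - 1)(p^d + 1)$, and so $k \leq p^d + 1$.

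The step I expect to be the main obstacle is the third one — ruling out a non-quasiprimitive type — because this is where the abelianness hypothesis becomes essential: the elementary-abelian structure forces $H'$ to act regularly on $X_\ell$, creating the clash with the lower bound $|H'| \geq p^{2d}$ coming from the pairwise-independent $T_i$'s. Without abelianness (as in cases (i) and (ii) of Proposition \ref{compmultprop}), a non-quasiprimitive type can persist and a different reduction is required. The concluding partial-spread count in $\mathbb{F}_p^{2d}$ is then routine.
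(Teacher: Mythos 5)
Your proof is correct and reaches the same conclusions, but it replaces the paper's two key technical steps with more elementary arguments. To pin down $\langle T_i \mid i\in I\rangle\cong\mathbb{Z}_p^{2d}$, the paper proves a structural claim (each $T_\ell$ is a full diagonal subgroup of $T_i\times T_j$, obtained from $T_i^{X_\ell}=T_j^{X_\ell}$ via Lemma \ref{centlem}), whereas you get the same group by computing $|H|=|H^{X_\ell}|\cdot|T_\ell|=p^d\cdot p^d$ from the regularity of the abelian transitive image $H^{X_\ell}$; both are fine. The bigger divergence is the bound $k\leq p^d+1$: the paper represents the conjugation action of $G$ on $T_1\times T_2$ inside $\GL(2d,p)$, uses normality of each $T_\ell$ to force the matrices $A$ defining the diagonal subgroups to differ by elements of $C_{\GL(d,p)}(H)$, and invokes irreducibility plus Schur's Lemma to bound that centraliser by $\GL(1,p^e)$. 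Your partial-spread count --- $k$ pairwise trivially intersecting subgroups of order $p^d$ inside an elementary abelian group of order $p^{2d}$ give $k(p^d-1)\leq p^{2d}-1$ --- is shorter, avoids Schur's Lemma and the minimality/irreducibility of $T_1$ entirely, and yields exactly the same bound; Example \ref{abelianpregeom} (a spread) shows it is sharp, so nothing is lost. One small repair: in your third step you say you rule out $I_{\rm nonqp}\neq\varnothing$ and then conclude $|I_{\rm unf}|=k$, but a faithful type could a priori lie in $I_{\rm qp}$ rather than $I_{\rm nonqp}$; since your contradiction uses only $T_\ell=1$, you should phrase it as ruling out any $G$-faithful type, and then the conclusion $I=I_{\rm unf}$ follows as claimed. (The paper handles this same point implicitly, by showing $T_\ell$ contains the nontrivial anti-diagonal subgroup $S_\ell$.)
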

\begin{proof}  First we note that, by Lemma \ref{regularkernels} (c), $T_i \cong T_u$ for all types $u$, so $T_u$ is abelian also.

Let $i$ and $j$ be distinct types, and let $N$ be a minimal normal subgroup of $G$ contained in $T_i$.  Now $T_i$ is abelian, so $N = \mathbb{Z}_p^d$ for some prime $p$.  Moreover, by Lemma \ref{regularkernels} (b), $T_i$ acts faithfully and regularly on $X_j$.  Thus $N$ is faithful and semi-regular on $X_j$ and so as $\Gamma$ is $G$-basic, $N = T_i$ so $T_i$ is a minimal normal subgroup of $G$, and $|N| = |X_j|$.  Since there are at least $3$ unfaithful types, repeating this argument we have $|X_\ell| = p^d$ for all $\ell$.  It follows from the fact that $\Gamma$ is complete multipartite that $\Gamma = \Gamma(k,m)$ where $m = p^d$.

Again, let $i$ and $j$ be distinct types, and note that $T_i \cong T_j \cong \mathbb{Z}_p^d$ and $T_i \times T_j \leq G$ by Lemma \ref{regularkernels}.  

\nl
{\em Claim}: For each type $\ell \neq i,j$, $T_\ell$ is a full diagonal subgroup of $T_i \times T_j$ and $\langle T_r \, | \, r \in I \rangle=T_i\times T_j\cong \mathbb{Z}_p^{2d}$.

\nl
{\em Proof of claim}:  Since $T_i$ and $T_j$ centralise each other, the groups $T_i^{X_\ell}$ and $T_j^{X_\ell}$ centralise each other as subgroups of $\Sym(X_\ell)$, and since each of these subgroups is abelian Lemma \ref{centlem} implies that $T_i^{X_\ell} = T_j^{X_\ell}$.

Thus, let $S_\ell := \{t_it_j \, | \, t_i \in T_i, t_j \in T_j, t_j^{X_\ell} = (t_i^{X_\ell})^{-1}\}$.  
Since $[T_i,T_j]=1$ the set $S_{\ell}$ is a subgroup of $T_i\times T_j$. In fact it is a full diagonal subgroup of $T_i \times T_j \leq G$, implying that $S_\ell \cong T_i \cong T_j$, and also $S_\ell$ acts trivially on $X_\ell$.  It follows from Lemma \ref{regularkernels} (c) that $S_\ell = T_\ell$.  Hence the claim is proved.

\smallskip
Without loss of generality we let $\{i,j\}=\{1,2\}$. Identifying each of $T_1$ and $T_2$ with the vector space $V' := V(d,p)$ we may identify the group induced by the conjugation action of $G$ on $T_1 \times T_2$ with a subgroup $\hat{G}$ of $\GL(2d,p)$ in its action on $V := V' \oplus V'$.  Under this identification, the group $T_\ell$ corresponds to a subspace $V_A := \{(u,uA) \, | \, u \in V'\}$ for some matrix $A$ in $\GL(d,p)$.

Let $g \in G$.  As $G$ normalises each of $T_1$ and $T_2$, there exist $B_1$ and $B_2$ in $\GL(d,p)$ such that the element $\hat{g}$ of $\hat{G}$ induced by $g$ is equal to
$$\left( \begin{array}{c|c} B_1 & {\bf 0}\\
\hline
 {\bf 0} & B_2
\end{array} \right)$$
where ${\bf 0}$ denotes the $d \times d$ zero-matrix.  Since $T_\ell \unlhd G$, we have $V_A^{\hat{g}} = \{(uB_1,uAB_2) \, | \, u \in V'\} = V_A$, and this implies that $AB_2 = B_1A$, and hence $B_2 = A^{-1}B_1A$.  Let $H \leq \GL(d,p)$ be the image of $\hat{G}$ under the projection
$$\left( \begin{array}{c|c} B_1 & {\bf 0}\\
\hline
 {\bf 0} & B_2
\end{array} \right) \longmapsto B_1.$$

Now, let $s\in I\backslash\{1,2,\ell\}$. By the Claim, $T_s$ corresponds to a subspace $V_{A'} = \{(u,uA') \, | \, u \in V'\}$ for some $A' \in \GL(d,p)$.  Arguing as above we find that $B_2 = (A')^{-1}B_1A'$, and hence that $A^{-1}B_1A = (A')^{-1}B_1A'$.  Thus, for all $B \in H$, $B = A'A^{-1}BA(A')^{-1}$ and so $A'A^{-1}$ centralises $H$.  As $T_1$ is a minimal normal subgroup of $G$, it follows that $H$ acts irreducibly on $V'$.  By Schur's Lemma (see \cite[(12,4)]{A}), the centraliser of $H$ in the matrix algebra $M(d,p)$ is  a finite division ring and hence $C_{\GL(d,p)}(H)\cong \GL(1,p^e)$ for some $e$ dividing $d$. Thus $|I\backslash\{1,2\}|\leq p^e-1\leq p^d-1$.
\end{proof}

We give an example to show that the bound $k=p^d+1$ can be achieved.  

\begin{example} \label{abelianpregeom}
{\em
Let $T = \mathbb{Z}_p^d$ for some prime $p$ and integer $d$.  Identify $T$ with the additive group of the finite field $\mathbb{F}_{p^d}$ of order $p^d$, and note that $\mathbb{F}_{p^d}^*$ acts transitively by field multiplication on $T^*$.  Let $G = (T \times T).H$ where $H = \{(h,h) \, | \, h \in \mathbb{F}_{p^d}^*\}$.  Then $T \times 1$ and $1 \times T$ are minimal normal subgroups of $G$.
For all $\lambda \in \mathbb{F}_{p^d}$ define $T_\lambda = \{(u,\lambda u)  \, | \, u \in T\}$.  Also, define $T_{\infty} = 1 \times T$.
Let $\{0,\infty\} \subseteq \Lambda \subseteq \mathbb{F}_{p^d}$.  For each $\lambda \in \Lambda$, define $X_\lambda = [G: T_\lambda.H]$, the set of right cosets of $T_\lambda.H$ in $G$, with $G$ acting on $X_\lambda$ by right multiplication.  Thus $G_{(X_\lambda)} = T_\lambda$.

Define the pregeometry $\Gamma_\Lambda = (X,\ast,t)$ by $X = \bigdotcup_{\lambda \in \Lambda} X_\lambda$, $t(x) = \lambda$ whenever $x \in X_\lambda$, and $x \ast y$ if either $x = y$ or $t(x) \neq t(y)$.
It is straightforward to verify that $\Gamma_{\Lambda} \cong \Gamma(k,m)$ where $m = p^d$ and $k = |\Lambda| \leq p^d +1$ (and if $\Lambda = \mathbb{F}_p^d \cup \{\infty\}$ then $k = p^d + 1$), and that $\Gamma, G$ satisfy Hypothesis~\ref{compmulthypo}.
}
\end{example}

\begin{lemma} \label{compmultabelian}
Suppose that $\Gamma$ and $G$ satisfy Hypothesis {\em \ref{compmulthypo}} with $k \geq 4$.  Let $i$ be an unfaithful type.  Then $T_i$ is abelian.
\end{lemma}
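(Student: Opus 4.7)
The plan is to show first that Hypothesis~\ref{compmulthypo} with $k\geq 4$ forces every type to be $G$-unfaithful, so that $|I_{\rm unf}|=k\geq 4$, and then to exploit the commuting regular kernels of three unfaithful types distinct from $i$, via two applications of the Centraliser Lemma, to force $T_i$ to coincide with its own centre.

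I would start by ruling out the possibility of a $G$-faithful type. Since at most one type is faithful and $k\geq 4$, we have $|I_{\rm unf}|\geq 3$. Suppose for contradiction that some type $\ell$ is faithful, and pick three distinct unfaithful types $i_1,i_2,i_3$. By Lemma~\ref{regularkernels}(b) each $T_{i_s}$ is faithful and regular on $X_\ell$, and by Lemma~\ref{regularkernels}(a) combined with normality the three kernels pairwise commute. The centraliser in $\Sym(X_\ell)$ of the regular group $T_{i_1}^{X_\ell}$ is its opposite regular representation, of order $|T_{i_1}|$, so Lemma~\ref{centlem} forces the transitive subgroups $T_{i_2}^{X_\ell}$ and $T_{i_3}^{X_\ell}$ of this centraliser to fill it up: $T_{i_2}^{X_\ell}=T_{i_3}^{X_\ell}=C_{\Sym(X_\ell)}(T_{i_1}^{X_\ell})$. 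Since $G$ acts faithfully on $X_\ell$, this lifts to $T_{i_2}=T_{i_3}$ as subgroups of $G$, contradicting Lemma~\ref{regularkernels}(a). Hence $|I_{\rm unf}|=k\geq 4$.

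Next I would choose three further unfaithful types $a,b,c$ distinct from $i$ (possible since $|I_{\rm unf}|\geq 4$), and repeat the centraliser argument on $X_a$ with $T_b^{X_a}$ in place of $T_{i_1}^{X_\ell}$: both $T_i^{X_a}$ and $T_c^{X_a}$ are transitive subgroups of $C_{\Sym(X_a)}(T_b^{X_a})$ of order $|T_b|$, hence both equal this centraliser. In $G^{X_a}=G/T_a$ this yields $T_iT_a=T_cT_a$, so $T_i\subseteq T_cT_a$. To finish I would compute $|T_cT_a\cap T_i|$ via the action on $X_i$: there $T_a^{X_i}$ and $T_c^{X_i}$ are each faithful and regular (Lemma~\ref{regularkernels}(b)) and centralise each other, so by Lemma~\ref{centlem} $T_a^{X_i}=C_{\Sym(X_i)}(T_c^{X_i})$ and hence $T_c^{X_i}\cap T_a^{X_i}=Z(T_c^{X_i})\cong Z(T_c)$. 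Combined with $|T_cT_a|=|T_c|^2$ (Lemma~\ref{regularkernels}(a)) and $|(T_cT_a)^{X_i}|=|T_c^{X_i}|\,|T_a^{X_i}|/|T_c^{X_i}\cap T_a^{X_i}|=|T_c|^2/|Z(T_c)|$, this yields $|T_cT_a\cap T_i|=|Z(T_c)|$. Since $T_i\subseteq T_cT_a$ and $T_i\cong T_c$ (Lemma~\ref{regularkernels}(c)), I conclude $|T_i|=|Z(T_c)|=|Z(T_i)|$, so $T_i=Z(T_i)$ is abelian.

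The main obstacle will be the bookkeeping in transferring between regular or centralising subgroups of $\Sym(X_\ell)$ or $\Sym(X_a)$ and subgroup relations inside $G$. In Step~1 the equality $T_{i_2}^{X_\ell}=T_{i_3}^{X_\ell}$ lifts to $T_{i_2}=T_{i_3}$ only because $G$ is faithful on $X_\ell$, which is exactly what produces the contradiction; in Step~2 the analogous equality $T_i^{X_a}=T_c^{X_a}$ on the unfaithful type $a$ yields only the weaker conclusion $T_iT_a=T_cT_a$, and it is the subsequent order count on $X_i$ that recovers the abelianity of $T_i$.
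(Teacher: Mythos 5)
Your proof is correct: every application of Lemmas \ref{regularkernels} and \ref{centlem} is legitimate, and the order count at the end does force $T_i=Z(T_i)$. It shares its core mechanism with the paper's proof --- three pairwise-commuting kernels acting regularly on a common fourth type, with the Centraliser Lemma forcing two of them to have equal images there --- but it is longer than necessary in two places. The paper takes $i$ together with two further unfaithful types $j,\ell$ and an \emph{arbitrary} fourth type $r$ (faithful or not), obtains $T_i^{X_r}=T_j^{X_r}=C_{\Sym(X_r)}(T_\ell^{X_r})$, and then finishes in one line: $T_i^{X_r}$ and $T_j^{X_r}$ centralise each other and coincide, so $T_i^{X_r}$ is self-centralising, hence abelian, and $T_i^{X_r}\cong T_i$ by Lemma \ref{regularkernels}(b). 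Because the fourth type need not be unfaithful, your Step~1 (showing $|I_{\rm unf}|=k$) is not required --- although it is a correct observation, and it drops out of the same equality as a by-product: if $r$ were faithful, $T_i^{X_r}=T_j^{X_r}$ would lift to $T_i=T_j$, against Lemma \ref{regularkernels}(a). Your Step~2 reaches the analogous equality $T_i^{X_a}=T_c^{X_a}$ and could have stopped there for the same reason: $T_i^{X_a}$ and $T_c^{X_a}$ centralise each other and are equal, so $T_i^{X_a}$ is abelian, and $T_i^{X_a}\cong T_i$ since $T_i$ is faithful on $X_a$. The detour through $T_iT_a=T_cT_a$ and the computation $|T_cT_a\cap T_i|=|Z(T_c)|$ is valid but buys nothing extra, so I would replace it with the self-centralising observation.
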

\begin{proof}
By Hypothesis \ref{compmulthypo}, since $k \geq 4$, $\Gamma$ has at least $3$ unfaithful types, say $i,j,\ell$.  Let $r \in I \backslash \{i,j,\ell\}$.  By Lemma \ref{regularkernels}, $T_i^{X_r},T_j^{X_r}$ and $T_{\ell}^{X_r}$ are all regular and $T_i^{X_r}, T_j^{X_r} \leq C_{\Sym(X_r)}(T_\ell^{X_r})$. Lemma \ref{centlem} implies that $T_i^{X_r} = C_{\Sym(X_r)}(T_\ell^{X_r})= T_j^{X_r}$. Moreover, by Lemma \ref{regularkernels}, $T_i^{X_r}$ and $T_j^{X_r}$ centralise each other, so $T_i^{X_r}$ is abelian.  By Lemma \ref{regularkernels}(b), $T_i^{X_r} \cong T_i$, and the result follows.
\end{proof}

\begin{lemma} \label{propcaseb}
Suppose that $\Gamma$ and $G$ satisfy Hypothesis {\em \ref{compmulthypo}}, and assume also that $k = 3$, all types are $G$-unfaithful, and for some type $s$, $T_s$ is non-abelian.  Then the $T_i$ are isomorphic nonabelian minimal normal subgroups of $G$, $\Gamma=\Gamma(3,m)$ with $m=|T_1|$, and $\langle T_1, T_2, T_3 \rangle = T_1 \times T_2 \times T_3$, as in line (ii) of Table {\em \ref{compmulttable}}.
\end{lemma}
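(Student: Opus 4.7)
My plan is to apply Corollary \ref{samesize} and Lemma \ref{regularkernels} to pin down the sizes and isomorphism type of the $T_i$, then to use the $G$-normal-basic hypothesis to show each $T_i$ is a minimal normal subgroup of $G$, and finally to exploit faithfulness of $T_1 \times T_2$ on $X_3$ (enabled by triviality of the centre of a nonabelian characteristically simple group) to obtain the direct product decomposition.

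First, since $|I_{\rm unf}|=k=3$, Corollary \ref{samesize} gives $\Gamma=\Gamma(3,m)$, and Lemma \ref{regularkernels} gives, for distinct $i,j$, that $T_i \cap T_j=1$ and that $T_i$ is faithful and regular on each $X_\ell$ with $\ell\neq i$. Hence $|T_i|=m$ and $T_1\cong T_2\cong T_3$; since some $T_s$ is nonabelian, so are all the $T_i$. To establish minimality, I would let $N$ be a minimal normal subgroup of $G$ contained in $T_i$; since $\Gamma$ is $G$-normal-basic, $N$ is transitive on all but at most one $X_\ell$, so in particular on some $X_j$ with $j\neq i$. But $N\leq T_i$ which is semi-regular on $X_j$, so $N$ is semi-regular and transitive on $X_j$, forcing $|N|\geq|X_j|=m=|T_i|$ and thus $N=T_i$.

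For the direct product claim, Lemma \ref{regularkernels}(a) together with the pairwise normality of the $T_i$ gives $[T_i,T_j]=1$ for $i\neq j$, so $T_1 T_2=T_1\times T_2$ and $\langle T_1,T_2,T_3\rangle=T_1 T_2 T_3$. It then remains to show $T_3\cap(T_1\times T_2)=1$. This intersection is normal in $G$ and contained in the minimal normal subgroup $T_3$, so it equals $1$ or $T_3$. Supposing for contradiction that $T_3\leq T_1\times T_2$, I would consider the action on $X_3$: Lemma \ref{centlem} forces $T_2^{X_3}=C_{\Sym(X_3)}(T_1^{X_3})$, so $T_1^{X_3}\cap T_2^{X_3}=Z(T_1^{X_3})$; since $T_1$ is a nonabelian minimal normal subgroup it is characteristically simple with trivial centre, whence $T_1\times T_2$ acts faithfully on $X_3$. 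But $T_3$ acts trivially on $X_3$, forcing $T_3=1$ and contradicting $|T_3|=m\geq 2$. The main obstacle is precisely this final step, where the nonabelian hypothesis enters via $Z(T_1)=1$; this is exactly what fails in the abelian setting of Lemma \ref{propcasec}, where many $T_i$ can sit as diagonal subgroups inside $T_1\times T_2$.
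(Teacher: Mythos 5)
Your proof is correct and follows essentially the same route as the paper: Lemma \ref{regularkernels} and Corollary \ref{samesize} for the structure and $\Gamma=\Gamma(3,m)$, regularity plus normal-basicness for minimality of each $T_i$, and the triviality of $T_3\cap(T_1\times T_2)$ for the direct product. The only (harmless) variation is in the last step, where you derive the contradiction from faithfulness of $T_1\times T_2$ on $X_3$ via $Z(T_1)=1$, whereas the paper argues that $T_1$ and $T_2$ are the only minimal normal subgroups of $G$ inside $T_1\times T_2$ --- both hinge on the same nonabelian characteristically simple structure.
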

\begin{proof}
By Lemma \ref{regularkernels} (a), $\langle T_i, T_j \rangle = T_i \times T_j$ for all $i,j$.  Also, by Lemma \ref{regularkernels}(c) (and since $T_s$ is nonabelian), $T_i\cong T_s$ is nonabelian for all $i$.    By Lemma \ref{regularkernels}(b), for all $i\neq j$, $T_i$ is regular and faithful on $X_j$. It follows that $\Gamma=\Gamma(3,m)$ with $m=|T_1|$. Let $N$ be a minimal normal subgroup of $G$ contained in $T_i$. Since $\Gamma$ is $G$-normal-basic for $j\neq i$, $N^{X_j}$ is transitive and as $T_i^{X_j}$ is faithful and regular on $X_j$ it follows that $N=T_i$. Thus $T_i$ is a minimal normal subgroup of $G$.

Let $1,2,3$ be the three types and let $\hat{T_3} = T_3 \cap (T_1 \times T_2)$.  If $\hat{T_3}\neq 1$ then 
as $\Gamma$ is $G$-normal-basic, $\hat{T_3}$ is transitive on $X_1$ and $X_2$ and hence $|T_3|=|X_1|$ 
divides $|\hat{T_3}|$. Thus $T_3=\hat{T_3}\leqslant T_1\times T_2$. Since $T_1$ and $T_2$ are direct 
products of nonabelian simple groups and are minimal normal subgroups of $G$, the only minimal normal 
subgroups of $G$ contained in $T_1\times T_2$ are $T_1$ and $T_2$, a contradiction. Hence $\hat{T_3}=1$ and 
so $\langle T_1, T_2, T_3 \rangle = T_1 \times T_2 \times T_3$.
\end{proof}

We are now ready to prove Proposition \ref{compmultprop}.

\begin{proof}[Proof of Proposition \ref{compmultprop}.]  By Hypothesis  \ref{compmulthypo}, $|I_{\rm unf}|\geq k-1\geq 2$. Assume first that $|I_{\rm unf}| = 2$.  Then since by assumption $k \geq 3$ and $\Gamma$ has at most one faithful type, we have $k = 3$, and Lemma \ref{propcasea} shows that $|I_{\rm qp}| = 1$, as in line (i) of Table \ref{compmulttable}.
 
Now suppose that $|I_{\rm unf}| \geq 3$.  Then Corollary \ref{samesize} implies that $\Gamma = \Gamma(k,m)$ for some $m$.  Assume first that $k = 3$, and let $i$ be a type.  If $T_i = G_{(X_i)}$ is non-abelian then Lemma \ref{propcaseb} gives line (ii) of Table \ref{compmulttable}, and if $T_i$ is abelian then Lemma \ref{propcasec} gives line (iii).  Finally, assume that $k \geq 4$.  Then Lemma \ref{compmultabelian} shows that the conditions of Lemma \ref{propcasec} hold, giving line (iii) again.
\end{proof}

\begin{proof}[Proof of Theorem \ref{normalbasicthm}] By Proposition \ref{basicpregeoms}, either (i), (ii) or (iv) holds or $\Gamma$ is complete multipartite and $G$ is faithful on at most one of the $X_i$, that is, Hypothesis \ref{compmulthypo} holds. If $k=2$ then $\Gamma=\Gamma(1,m)\oplus\Gamma(1,m')$ by Lemma~\ref{lem:faithpart}, while if $k\geq 3$ then Proposition \ref{compmultprop} yields the two remaining cases of the theorem.
\end{proof}

\section{Acknowledgements}
The paper forms part of the ARC Discovery Project Grant DP0770915.
The authors acknowledge support from the Australian Research Council: an Australian Research Fellowship (first author) and Federation Fellowship (fourth author).


\end{document}